\pgfplotsset{compat=1.11}
\title{A novel class of stabilized greedy kernel approximation algorithms: Convergence, stability \& uniform point distribution}
\author[1]{Tizian Wenzel \thanks{tizian.wenzel@mathematik.uni-stuttgart.de}}
\author[1,2]{Gabriele Santin \thanks{gsantin@fbk.eu, \href{http://orcid.org/0000-0001-6959-1070}{orcid.org/0000-0001-6959-1070}}}
\author[1]{Bernard Haasdonk \thanks{haasdonk@mathematik.uni-stuttgart.de}}
\affil[1]{Institute for Applied Analysis and Numerical Simulation, University of Stuttgart, Germany}
\affil[2]{Center for Information and Communication Technology, Fondazione Bruno Kessler, Italy}
\begin{document}

\maketitle 
\begin{abstract}
Kernel based methods provide a way to reconstruct potentially high-dimensional functions from meshfree samples, i.e., sampling points and corresponding target 
values. 
A crucial ingredient for this to be successful is the distribution of the sampling points. 
Since the computation of an optimal selection of sampling points may be an infeasible task, one promising option is to use greedy methods. 

Although these methods may be very effective, depending on the specific greedy criterion the chosen points might quickly lead to instabilities in the 
computation. 
To circumvent this problem, we introduce and investigate a new class of \textit{stabilized} greedy kernel algorithms, which can be used to 
create a scale of new selection strategies. 

We analyze these algorithms, and in particular we prove convergence results and quantify in a precise way the distribution of the selected points. These 
results allow to 
prove, in the case of certain Sobolev kernels, that the algorithms have optimal stability and optimal convergence rates, including for functions outside the 
native space 
of the kernel. 
The results also apply to the case of the usual $P$-greedy algorithm, significantly improving state-of-the-art results available in the literature. 
Illustrative experiments are presented that support the theoretical findings. %
\end{abstract}

\section{Introduction}\label{sec:introduction}

We start with introducing our central terminology and notation, which is well-known in kernel approximation literature, cf.\ \cite{Fasshauer2007, Fasshauer2015, Wendland2005}. For a nonempty set $\Omega \subset \mathbb{R}^d$ a real valued kernel is defined as a symmetric function $k: \Omega \times \Omega \rightarrow \mathbb{R}$. 
The \textit{kernel matrix} $A \in \mathbb{R}^{N \times N}$ associated to this kernel and to an arbitrary set of points $X_N = \{x_1, .., x_N\} \subset \Omega$ is given by entries
$A_{ij} = k(x_i, x_j)$. In the following the sets $X_N$ will always be assumed to consist of pairwise distict points. A kernel $k$ is called \textit{strictly positive definite} if the kernel matrix is positive definite for any such $X_N 
\subset \Omega$ 
and any $N \in \mathbb{N}$.

For every strictly positive definite kernel on $\Omega$ there is a unique \textit{native space} $\ns$ with inner product $(\cdot, 
\cdot)_{\ns}$, i.e., a Hilbert space of functions where the kernel $k$ acts 
as a reproducing kernel, that is
\begin{enumerate}
\item $k(\cdot, x) \in \ns ~ \forall x \in \Omega$,
\item $(f, k(\cdot, x))_{\ns} = f(x) ~ \forall f \in \ns, \forall x \in \Omega$.
\end{enumerate}

Strictly positive definite kernels can be used to interpolate functions. For this, consider a function $f \in \ns$. Then there is a unique interpolant $s_N$ of $f$ on $X_N$, which is given by the orthogonal projection $\Pi_{V(X_N)}(f)$ of $f$ to the subspace $V(X_N) := \text{span}\{ k(\cdot, x_i), x_i \in X_N \}$. Additionally this interpolant is the minimum norm interpolant among all interpolants from 
$\ns$. It can be expressed as
\begin{align*}
s_N(\cdot)=\Pi_{V(X_N)}(f) = \sum_{i=1}^N \alpha_i k(\cdot, x_i),
\end{align*}
where the coefficients $\alpha = ( \alpha_i )_{i=1}^N \in \R^N$ can be computed by solving the linear equation system
\begin{align} \label{eq:linear_eq_system}
A \alpha = b,
\end{align}
with the kernel matrix $A$ of $k$ on $X_N$, and the vector $b = (f(x_i))_{i=1}^N$ of function values. Since the kernel is assumed to be strictly positive 
definite, 
the kernel matrix is positive definite and thus Equation \eqref{eq:linear_eq_system} possesses a unique solution.

A priori it is usually unclear how to choose an appropriate set of sampling points, although it is evident that this choice strongly influences the results of the interpolation process. To overcome this problem, often greedy methods are applied \cite{DeMarchi2005, Mueller2009, SchWen2000, Wirtz2013, SH16b}. 

Greedy algorithms are largely studied and applied in the fields of approximation and computational mathematics. We refer for example to  
\cite{Temlyakov2008} for a general treatment of greedy methods. Convergence 
analysis for several greedy algorithms, also in reduced basis methods, can be found in \cite{Binev2011, BM12, 
H13, Mula2016}.

In the present setting of kernel interpolation, the greedy algorithms start with an empty set $X_0 
:= \emptyset$ and then they iteratively add another interpolation point as $X_{N+1} := X_{N} \cup \{ x_{N+1} \}$ at each step which is optimal with regard to 
some selection criterion $\eta^{(N)}$:
\begin{align}\label{eq:greedy_selection}
x_{N+1} = \argmax_{x \in \Omega} \eta^{(N)}(x).
\end{align}
These greedy algorithms stop if the interpolant is exact or if some predefined stopping criterion is met, and they are practicable and fast.

In the greedy kernel literature there are three main selection criteria, namely $f$-greedy, $P$-greedy and $f/P$-greedy \cite{SchWen2000, DeMarchi2005, 
Mueller2009}, 
that we will recall in the following. Depending on the chosen selection criterion different interpolation point distributions may result, and they lead to 
different 
effects concerning convergence speed or stability of the numerical scheme. 

The convergence is closely linked to the \textit{fill distance} $h_{X_N, \Omega}$ which describes the largest ball which can be centered 
in $\Omega$ without intersecting any interpolation point $x_i \in X_N$. The \textit{separation distance} $q_{X_N}$ measures instead  the distance of the two 
closest 
interpolation 
points and it can be related to the stability of the interpolation procedure via the condition number of the kernel matrix. Thus we define
\begin{equation}\label{eq:fill_and_sep_dist}
\begin{aligned}
h_N &:=& h_{X_N, \Omega} &:= \sup_{x \in \Omega} \min_{x_i \in X_N} \Vert x-x_i \Vert_2, \\
q_N &:=& q_{X_N} &:= \min_{x_i \neq x_j \in X_N} \Vert x_i - x_j \Vert_2.
\end{aligned}
\end{equation}
Roughly speaking, a small fill distance implies a small error, while a large separation distance guarantees stability. If a sequence of sets 
of points satisfies $h_N \leq C \cdot q_N$ for some $C > 0$ and all $N \in \mathbb{N}$, then the \textit{uniformity constant} $\rho_N:=h_N/q_N \leq C$ is 
bounded and  
the sequence is called \textit{asymptotically uniformly distributed} and, intuitively, it 
possesses both the desirable properties and it is thus particularly suitable for interpolation.

In this paper we introduce a new class of greedy kernel approximation algorithms, the so called \textit{stabilized} or \textit{restricted greedy algorithms}. 
They are 
obtained as a modification of existing selection strategies by introducing a restriction on the set of admissible points, which is steered by a parameter 
$\gamma\in(0,1]$. This modification creates, for each selection rule, a scale of methods depending on $\gamma$. 

We prove convergence results for the new scale of algorithms by controlling the decay of the Power function associated with the selected points. This result is 
an extension of a result in \cite{SH16b}, and proves that the Power function of the new algorithms decays algebraically or exponentially, depending on known 
convergence rates for interpolation by uniform points.

In the case of certain kernels associated to Sobolev spaces, we also prove that the known decay rates are optimal up to constants, i.e., no better rate of 
decay 
of the Power function can be obtained, even for points that are globally optimized instead than selected iteratively. %

We then use this decay of the Power function to derive three groups of results. First, we obtain a refined version of a bound of \cite{DeMarchi2005} and 
use it in combination with an idea of \cite{SH16b} to obtain a precise decay of the fill distance of the points selected by the algorithms. This result is a 
strict 
improvement of the one of \cite{SH16b}, and in particular it allows to obtain error bounds for the interpolation error w.r.t. a generic $L^p$ norm (and not 
necessarily 
the $L^{\infty}$ norm) and for the approximation of the derivatives of the target function. These general rates of convergence coincide with the worst-case 
optimal ones.

Second, we modify an idea introduced in \cite{DeMarchi2005} to deduce precise lower bounds for the decay of the separation distance. Thanks to this result, we 
can guarantee a control on the condition number of the interpolation matrices and on the Lebesgue constant, and thus on the stability of the algorithms. 

Third, combining the first two results we prove that the selected points are asymptotically uniform. Since these algorithms can be run very efficiently on very 
general 
geometries, they can be used to generate uniform point distributions for other methods such as meshless PDE solvers. Moreover, a result of \cite{Narcowich2006} 
applies 
to the interpolation by these new algorithms, and in particular we prove that they can be used to approximate  with 
optimal rates of convergence functions which are outside the native space of the kernel.

All these results apply to any of the $\gamma$-stabilized greedy algorithms, with constants depending on $\gamma$. In particular, they cover the case of 
$P$-greedy, for which the results of this paper provide new or refined results. The $f$- and $f/P$-greedy selections are also covered. In this case the 
modified algorithms allow to obtain for the first time convergence rates that are at least worst-case optimal.

The paper is organized as follows.
We start by recalling some additional details on kernel theory in Section \ref{sec:kernels}, while in Section \ref{sec:restricted_greedy_alg} the stabilized 
greedy algorithms are introduced. The following Section \ref{sec:conv_rates} gives precise bounds on the decay of the maximal Power function values, and 
these bounds allow to prove in Section \ref{sec:uniformity} that the resulting sampling points are distributed asymptotically uniformly by providing upper 
bounds on the 
fill distance and lower bounds on the separation distance. Using this uniformity, further results are drawn on the stability and convergence of the algorithms 
in Section \ref{sec:extensions}. Section \ref{sec:numerical_experiments} concludes with some numerical experiments which complement the analytic results.

\section{Kernel interpolation and greedy algorithms}\label{sec:kernels}

To begin with we start with some additional background about kernel based approximation. A thorough introduction and more details can be found e.g. in the monographs \cite{Wendland2005, Fasshauer2007, Fasshauer2015} already mentioned in the previous section.

A way to measure the interpolation error $\Vert f - \Pi_{V(X_N)}(f) \Vert_{L^{\infty}}$ is given by the Power function $P_N:=P_{X_N}: \Omega \rightarrow 
\mathbb{R}$, which can be 
defined as the norm of the pointwise error functional, i.e., 
\begin{align}
\label{eq:power_function_via_sup}
P_N(x) =& \sup_{0 \neq f \in \ns} \frac{|f(x) - \Pi_{V(X_N)}(f)(x)|}{\Vert f \Vert_{\ns}},
\end{align}
and it can be proven that it can be expressed in the following way:
\begin{align}\label{eq:power_function_as_norm}
P_N(x) 
=& \Vert k(\cdot, x) - \Pi_{V(X_N)}(k(\cdot, x)) \Vert_{\ns}.
\end{align}
Reshaping Equation \eqref{eq:power_function_via_sup} defining $P_N$ and taking the supremum norm directly yields the standard estimate on the 
interpolation error in the $L^{\infty}$ 
norm, namely
\begin{align}\label{eq:power_function_bound}
\Vert f - \Pi_{V(X_N)}(f) \Vert_{L^\infty(\Omega)} \leq \Vert P_N \Vert_{L^\infty(\Omega)} \cdot \Vert f \Vert_{\ns}, ~ f \in \ns.
\end{align}

Instead of expressing the interpolant in terms of the basis of so called kernel translates $k(\cdot, x_i)$, it is sometimes useful to consider the Lagrange 
basis. The 
following proposition collects some classical results in this direction (see e.g. \cite{Wendland2005}).
\begin{prop}\label{prop:lagrange_interpolant}
Let $X_N\subset\Omega$ be pairwise distinct points. Then there exists a unique Lagrange basis $\left\{l_j\right\}_{j=1}^N$ of $V(X_N)$, i.e., $V(X_N) = 
\Sp\{l_j, 
1\leq j\leq N\}$ and $l_j(x_i) = \delta_{ij}$ for all $1\leq i, j\leq N$. 
Furthermore, for any $f\in\calh$ the unique interpolant of $f$ on $X_N$ can be expressed as 
\begin{align*}
\Pi_{V(X_N)}(f)(x) 
= \sum_{j=1}^N f(x_j) l_j(x)\;\;\fa x\in \Omega,
\end{align*}
and the square of the Power function is given by 
\begin{align*}
P_N(x)^2 = k(x, x) - \sum_{j=1}^N k(x, x_j) l_j(x) \;\;\fa x\in\Omega. 
\end{align*}
Moreover, the Lebesgue constant
\begin{align*}
\Lambda_{X_N}:= \max\limits_{x\in\Omega} \sum_{j=1}^N \left|l_j(x) \right| 
\end{align*}
gives an upper bound on the sampling stability of the interpolation process, i.e.
\begin{align*}
\norm{L^{\infty}(\Omega)}{\Pi_{V(X_N)}(f)}\leq \Lambda_{X_N} \norm{\infty}{f|_X} \;\;\fa f\in\calh. 
\end{align*}
\end{prop}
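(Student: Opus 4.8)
The plan is to establish the three assertions of Proposition~\ref{prop:lagrange_interpolant} in order: existence and uniqueness of the Lagrange basis, the two interpolation-related formulas, and finally the Lebesgue constant bound. For the first part, I would argue that since the kernel $k$ is strictly positive definite, the translates $\{k(\cdot,x_i)\}_{i=1}^N$ are linearly independent and hence form a basis of the $N$-dimensional space $V(X_N)$. The point evaluation functionals $\delta_{x_1},\dots,\delta_{x_N}$ restricted to $V(X_N)$ are linearly independent as well (their Gram matrix with respect to the kernel inner product is exactly the kernel matrix $A$, which is positive definite). Therefore the linear map $V(X_N)\to\R^N$, $v\mapsto (v(x_i))_{i=1}^N$ is an isomorphism, and the Lagrange basis $\{l_j\}$ is defined as the preimages of the standard basis vectors $e_j$; uniqueness is immediate from injectivity. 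Concretely one can write $l_j = \sum_{i=1}^N (A^{-1})_{ij} k(\cdot, x_i)$ and check $l_j(x_m)=\sum_i (A^{-1})_{ij} A_{mi} = \delta_{jm}$.

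Next, for the interpolant formula, I would note that $\Pi_{V(X_N)}(f)$ is by definition the unique element of $V(X_N)$ that interpolates $f$ at $X_N$ (uniqueness of the interpolant was already recalled in Section~\ref{sec:introduction}). Since $g(x):=\sum_{j=1}^N f(x_j) l_j(x)$ lies in $V(X_N)$ and satisfies $g(x_i) = \sum_j f(x_j)\delta_{ij} = f(x_i)$, it must coincide with $\Pi_{V(X_N)}(f)$. For the Power function identity, I would start from \eqref{eq:power_function_as_norm}, $P_N(x)^2 = \|k(\cdot,x) - \Pi_{V(X_N)}(k(\cdot,x))\|_{\ns}^2$, and expand the squared norm using that the projection residual is orthogonal to $V(X_N)$: $P_N(x)^2 = (k(\cdot,x), k(\cdot,x))_{\ns} - (\Pi_{V(X_N)}(k(\cdot,x)), k(\cdot,x))_{\ns}$. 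The first term is $k(x,x)$ by the reproducing property. For the second, apply the interpolant formula with $f = k(\cdot,x)$, so $\Pi_{V(X_N)}(k(\cdot,x)) = \sum_j k(x_j,x) l_j$, and then $(\sum_j k(x_j,x) l_j, k(\cdot,x))_{\ns} = \sum_j k(x_j,x) l_j(x)$, again by reproduction. Using symmetry $k(x_j,x)=k(x,x_j)$ yields the claimed formula.

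Finally, for the Lebesgue constant bound, I would simply estimate pointwise: for any $x\in\Omega$,
\begin{align*}
|\Pi_{V(X_N)}(f)(x)| = \Bigl|\sum_{j=1}^N f(x_j) l_j(x)\Bigr| \leq \sum_{j=1}^N |f(x_j)|\,|l_j(x)| \leq \norm{\infty}{f|_X} \sum_{j=1}^N |l_j(x)| \leq \norm{\infty}{f|_X}\, \Lambda_{X_N},
\end{align*}
and then take the supremum over $x\in\Omega$. None of these steps is a serious obstacle; the whole proposition is a bookkeeping exercise assembling standard facts. If anything requires a moment of care, it is being explicit that the point-evaluation functionals are linearly independent on $V(X_N)$ (equivalently that $A$ is invertible), since this is what makes both the Lagrange basis well-defined and the change-of-basis formula $l_j = \sum_i (A^{-1})_{ij} k(\cdot,x_i)$ meaningful — but this is exactly the strict positive definiteness assumption, so it is already available. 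I would likely just cite \cite{Wendland2005} for the bulk of it and spell out only the Power function computation.
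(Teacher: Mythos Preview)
Your proof is correct in every step. The paper itself does not prove this proposition at all: it simply states it as a collection of classical results with a reference to \cite{Wendland2005}, which is exactly what you conclude you would do as well. Your write-up actually goes beyond the paper by spelling out the details (invertibility of the kernel matrix giving the Lagrange basis, the orthogonality argument for the Power function identity, and the triangle-inequality bound for the Lebesgue constant), all of which are standard and correctly executed.
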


As mentioned in the introduction, the success of this interpolation process depends crucially on the distribution of the points $X_N$ inside $\Omega$, which we 
will 
quantify by means of the fill distance and the separation distance defined in \eqref{eq:fill_and_sep_dist}. 
Using simple geometric arguments, which relate the volume of the domain $\Omega$ or a surrounding ball 
to the sum of the volumes of balls around the points $x_i \in X_N$, 
one can conclude some basic estimates on the fill distance and on the separation distance (see e.g. \cite{Mueller2009}).
\begin{theorem} \label{th:bounds_fill_sep_dist}
Let $\Omega \subset \mathbb{R}^d$ be bounded, $(X_N)_{N \in \mathbb{N}}$ be a sequence of sets of points within $\Omega$. Then there are constants $c_\Omega > 
0, 
C_\Omega' 
> 0$ such that
\begin{equation}
\begin{aligned}
h_N &\geq c_\Omega \cdot N^{-1/d}, \\
q_N &\leq C_\Omega' \cdot N^{-1/d}.
\end{aligned}
\end{equation}
\end{theorem}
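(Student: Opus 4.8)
The plan is to prove the two bounds by elementary volume-packing and covering arguments, treating the lower bound on $h_N$ via a covering estimate and the upper bound on $q_N$ via a packing estimate. Throughout, let $\omega_d := \mathrm{vol}(B_1(0))$ denote the volume of the unit ball in $\mathbb{R}^d$, and since $\Omega$ is bounded, fix $R>0$ such that $\Omega \subseteq B_R(x_0)$ for some $x_0$; write $D := \mathrm{vol}(B_R(x_0)) = \omega_d R^d$ for this bounding volume.

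For the fill distance bound, the key observation is that by the very definition of $h_N$ in \eqref{eq:fill_and_sep_dist}, every point $x\in\Omega$ lies within distance $h_N$ of some $x_i\in X_N$; hence the $N$ closed balls $\overline{B_{h_N}(x_i)}$, $i=1,\dots,N$, cover $\Omega$. Comparing volumes, one gets $\mathrm{vol}(\Omega) \le \sum_{i=1}^N \mathrm{vol}(\overline{B_{h_N}(x_i)}) = N\,\omega_d\, h_N^d$, and rearranging yields $h_N \ge \left(\mathrm{vol}(\Omega)/(N\omega_d)\right)^{1/d} =: c_\Omega\, N^{-1/d}$ with $c_\Omega = (\mathrm{vol}(\Omega)/\omega_d)^{1/d}$. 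A minor subtlety is that one should assume $\mathrm{vol}(\Omega)>0$ (otherwise the statement is vacuous or the constant degenerates); this is implicit in the setting of a sampling domain, and I would simply note it.

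For the separation distance bound, the dual argument applies: the balls $B_{q_N/2}(x_i)$, $i=1,\dots,N$, are pairwise disjoint, since any two centers are at distance at least $q_N$ by definition of $q_N$. Moreover each such ball is contained in $B_{R+q_N/2}(x_0)$; and since $q_N \le q_1$ is bounded (or, more simply, $q_N \le 2R$ as it is a distance between two points of $\Omega\subseteq B_R(x_0)$), we have $B_{q_N/2}(x_i)\subseteq B_{2R}(x_0)$, a fixed ball. Summing volumes of the disjoint balls gives $N\,\omega_d\,(q_N/2)^d \le \omega_d (2R)^d$, hence $q_N \le 2\cdot 2R\cdot N^{-1/d} = 4R\, N^{-1/d} =: C_\Omega'\, N^{-1/d}$ with $C_\Omega' = 4R$. (One can sharpen the constant by using $B_{R+q_N/2}$ and $q_N \le 2R$, giving $C_\Omega' = 3R$, but the precise value is immaterial.)

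I do not expect any genuine obstacle here: both estimates are standard and follow from monotonicity of volume under inclusion together with finite additivity over disjoint sets. The only points requiring a word of care are (i) ensuring the domain has positive volume so that $c_\Omega>0$, and (ii) bounding the radius $q_N/2$ uniformly in $N$ so that the packing balls stay inside a fixed reference ball — both handled in one line as above. A reference such as \cite{Mueller2009} or \cite{Wendland2005} can be cited for the same statement.
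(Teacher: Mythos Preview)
Your proposal is correct and matches the paper's approach: the paper does not spell out a proof but states that the result follows from ``simple geometric arguments, which relate the volume of the domain $\Omega$ or a surrounding ball to the sum of the volumes of balls around the points $x_i \in X_N$'' and cites \cite{Mueller2009}, which is precisely the covering/packing volume argument you carry out. The only cosmetic point is that $q_N \le q_1$ is not a useful bound (for $N=1$ the separation distance is vacuous), but your alternative $q_N \le 2R$ handles this, and the inequality for $q_N$ should be read for $N\ge 2$.
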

We recall that whenever for a given sequence of point sets there exists $C > 0$ such that additionally $h_N \leq C \cdot q_N$ holds for all $N \in \mathbb{N}$, 
then $(X_N)_{N \in \mathbb{N}}$ is said to be \textit{asymptotically uniformly distributed}.

For the selection of these sampling points we are interested in greedy algorithms which, as mentioned, construct a \textit{nested} sequence of points $(X_N)_{N \in \mathbb{N}}$ starting from  $X_0 := \emptyset$ and updating it at each iteration as $X_{N+1} := X_{N} \cup \{ x_{N+1} \}$ by selecting a point that maximizes a given selection criterion $\eta^{(N)}$ over $\Omega$. In particular, the commonly used $f$-, $P$-, and $f/P$-greedy selection rules use the 
following error indicators:
\begin{enumerate}[label=\roman*.]
\item $f$-greedy: \hspace{7.75mm} $\eta_f^{(N)}(x) = |f(x) - \Pi_{V(X_N)}(f)(x)|$
\item $P$-greedy: \hspace{7mm} $\eta_P^{(N)}(x) = P_{X_N}(x)$
\item $f/P$-greedy: \hspace{3.4mm} $\eta_{f/P}^{(N)}(x) = |f(x) - \Pi_{V(X_N)}(f)(x)|/P_{X_N}(x)$.
\end{enumerate}
The first two criteria are clearly aiming at reducing the pointwise interpolation error either directly, or via \eqref{eq:power_function_bound} by reducing the Power function. The $f/P$-greedy ("$f$ over $P$ greedy") selection, instead, combines the two and it can be proven to be locally optimal, i.e., it provides the best possible reduction of the interpolation error in the native space norm, at each iteration. However, for $f/P$-greedy it is necessary to perform the maximization over $\Omega \setminus X_N$ as the fraction is not well-defined in already selected points $X_N$.

We remark that here and in the following if the maximum in a selection rule is not unique then any of the points realizing the maximal value can be chosen arbitrarily. Whenever we will prove or recall results about the distribution of \textit{the} sequence of points selected by a greedy algorithm we refer to \textit{any} possible sequence.

So far our discussion applies to any given strictly positive definite kernel, but most of the analytical results of this paper are specialized to the 
remarkable case of 
kernels generating Sobolev spaces. To be more precise, we consider the class of translational invariant kernels, i.e., there exists a function $\Phi: 
\mathbb{R}^d \rightarrow \mathbb{R}$ such that the kernel can be 
expressed as $k(x,y) = \Phi(x - y)$. A special case is given by radial basis function kernels which can be expressed as $k(x,y) = \Phi(\Vert x-y \Vert_2)$  
with a \textit{radial basis function} $\Phi: \mathbb{R}^d \rightarrow \mathbb{R}$. Depending on the Fourier 
transform of this function $\Phi$, the native space $\ns$ can be characterized in terms of Sobolev spaces. That means that if there exist constants $c_\Phi, 
C_\Phi > 0$ and $\tau > d/2$ such that
\begin{align} \label{eq:asymptotic_fourier_transform}
c_\Phi (1+\Vert \omega \Vert_2^2 )^{-\tau} \leq \hat{\Phi}(\omega) \leq C_\Phi (1+\Vert \omega \Vert_2^2)^{-\tau} ~~ \forall \omega \in \mathbb{R}^d,
\end{align}
then the native space $\mathcal{H}_k(\mathbb{R}^d)$ is norm equivalent to the Sobolev space $W_2^\tau(\mathbb{R}^d)$. Under some mild conditions on the 
boundary this 
result also holds for domains $\Omega \subset \mathbb{R}^d$, which will be assumed in the following. Kernels whose native space is norm equivalent to such a 
Sobolev space 
$W_2^\tau$ will be called \textit{kernels of finite smoothness} $\tau$.

Generally, on the Sobolev spaces $W_p^k(\Omega)$, we use the following notation to denote the usual (semi-)norms for $k \in \mathbb{N}$,
\begin{align*}
|u|_{W_p^k(\Omega)} &:= \left( \sum_{|\alpha| = k} \Vert D^\alpha u \Vert_{L^p(\Omega)}^p \right)^{1/p} ~~ \text{for} ~ 1 \leq p < \infty, \\
|u|_{W_\infty^k(\Omega)} &:= \sup_{|\alpha| = k} \Vert D^\alpha u \Vert_{L^\infty(\Omega)},
\end{align*}
where $\alpha\in \N^d$ is a multiindex and $|\alpha|:=\alpha_1+\dots+\alpha_d$ is its length.

Moreover, for this class of kernels error estimates are available. A very general way to introduce them is provided by the 
following Theorem \ref{th:estimate_derivatives_wendland}. 
A first version of this theorem was first proved in \cite[Theorem 2.12]{Narcowich2004} and \cite[Theorem 2.6]{WendlandRieger2005} with slightly 
different assumptions on the indices. The 
improvements on these indices were justified in \cite{Narcowich2006}. The final form stated here in Theorem \ref{th:estimate_derivatives_wendland} was taken 
from 
\cite[Theorem 2.2]{LEGIA2006124}, with notation adapted to this paper.

\begin{theorem}
\label{th:estimate_derivatives_wendland}
Suppose $\Omega \subset \mathbb{R}^d$ is a bounded domain satisfying an interior cone condition and having a Lipschitz boundary. Let $X \subset \Omega$ be a 
discrete set 
with sufficiently small fill distance $h = h_{X,\Omega}$. Let $\tau = k + s$ with $k \in \mathbb{N}, 0 \leq s < 1, 1 \leq p < \infty, 1 \leq q \leq \infty, m 
\in 
\mathbb{N}_0$ with $k>m+n/p$ if $p>1$ or $k \geq m + n/p$ if $p=1$. Then for each $u \in W_p^\tau(\Omega)$ we have that
\begin{align*}
|u|_{W_q^m(\Omega)} \leq C \left( h^{\left(\tau-m-d(1/p-1/q)_+\right)} \cdot |u|_{W_p^{\tau}(\Omega)} + h^{-m} \Vert u|_X \Vert_\infty \right)
\end{align*}
where $C>0$ is a constant independent of $u$ and $h$ and $(x)_+ = \max\{x,0\}$.
\end{theorem}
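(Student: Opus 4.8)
The statement is a \emph{Sobolev sampling inequality}, and my plan is to prove it by the classical \emph{localize--reproduce--reassemble} strategy: first prove a local estimate on patches of diameter comparable to $h$, then glue the local estimates over a covering of $\Omega$. The interior cone condition together with the Lipschitz boundary is used precisely to guarantee that, for $h$ small enough, there is a covering of $\Omega$ by sets $D_1,\dots,D_M\subset\Omega$, each star-shaped with respect to a ball, each of diameter $\sim h$, with uniformly bounded shape parameters and finite overlap number, and with $M\sim h^{-d}$; moreover, by the definition of the fill distance, each $D_i$ can be chosen so that $X\cap D_i$, after rescaling $D_i$ to unit size, is a \emph{norming set} for the space $\Pi_{k-1}$ of polynomials of degree $<k$ (or $\Pi_k$ when $s>0$), with a norming constant independent of $i$ and $h$. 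This is the local polynomial reproduction result (see Wendland's monograph), and it is the only place where the smallness of $h$ relative to the cone parameters enters.

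On a single patch $D=D_i$, I would approximate $u\in W_p^\tau(D)$ by an averaged Taylor polynomial $Qu$. Dilating $D$ to unit size, applying a (fractional) Bramble--Hilbert estimate together with the Sobolev embedding $W_p^\tau\hookrightarrow W_q^m$ on the unit domain — legitimate because $\tau-m\ge k-m>d/p\ge d(1/p-1/q)$, which is exactly what the hypothesis $k>m+d/p$ (resp. $k\ge m+d/p$ for $p=1$) provides — and scaling back, one obtains
\[
  |u-Qu|_{W_q^m(D)}\;\le\; C\, h^{\,\tau-m-d(1/p-1/q)}\,|u|_{W_p^{\tau}(D)},
\]
where the exponent comes from the two scaling factors $h^{\tau-d/p}$ (from the $W_p^\tau$-seminorm) and $h^{d/q-m}$ (from the $W_q^m$-seminorm) on a domain of diameter $h$. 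For the polynomial part I would chain an inverse/Markov inequality on $D$, $|Qu|_{W_q^m(D)}\le C\,h^{d/q-m}\,\|Qu\|_{L^\infty(D)}$, then the norming-set inequality $\|Qu\|_{L^\infty(D)}\le C\max_{x\in X\cap D}|Qu(x)|$, and finally the triangle inequality with the case $q=\infty,m=0$ of the estimate above, $\max_{x\in X\cap D}|Qu(x)|\le\|u|_X\|_\infty+C\,h^{\tau-d/p}|u|_{W_p^\tau(D)}$ (the point values making sense since $\tau>d/p$). Collecting the pieces gives, on each patch,
\[
  |u|_{W_q^m(D)}\;\le\; C\Big(h^{\,\tau-m-d(1/p-1/q)}\,|u|_{W_p^{\tau}(D)}\;+\;h^{\,d/q-m}\,\|u|_X\|_\infty\Big).
\]

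Finally I would reassemble, and this is where the truncation $(\cdot)_+$ in the exponent appears. For $q<\infty$, summing the $q$-th powers over the $M\sim h^{-d}$ patches and using the finite overlap gives $|u|_{W_q^m(\Omega)}^q\le C\sum_i|u|_{W_q^m(D_i)}^q$; the $\|u|_X\|_\infty$ contributions sum to $C\,M\,h^{d-mq}\|u|_X\|_\infty^q\le C\,h^{-mq}\|u|_X\|_\infty^q$, i.e. $C\,h^{-m}\|u|_X\|_\infty$ after taking $q$-th roots. For the seminorm contributions, when $q\ge p$ one has $\sum_i|u|_{W_p^\tau(D_i)}^q\le\big(\sum_i|u|_{W_p^\tau(D_i)}^p\big)^{q/p}\le C\,|u|_{W_p^\tau(\Omega)}^q$ by $\ell^q\hookrightarrow\ell^p$ and finite overlap, so the local exponent $\tau-m-d(1/p-1/q)=\tau-m-d(1/p-1/q)_+$ survives; when $q<p$, the local exponent $\tau-m-d(1/p-1/q)=\tau-m+d(1/q-1/p)$ is larger than claimed, but a discrete H\"older inequality over the $M\sim h^{-d}$ patches degrades it by exactly $h^{-d(1/q-1/p)}$, again landing on $\tau-m=\tau-m-d(1/p-1/q)_+$. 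The case $q=\infty$ is identical with a supremum in place of the sum. I expect the main obstacle to be exactly this bookkeeping — tracking the dilation factors in the Bramble--Hilbert estimate, the volume factor separating $L^p$ from $L^q$, the $h^{-m}$ from the Markov inequality, and the $\sim h^{-d}$ number of patches, and checking that they collapse to the stated exponents $\tau-m-d(1/p-1/q)_+$ and $-m$ — together with verifying that the local polynomial reproduction constants are genuinely uniform in $i$ and $h$, which is the structural role of the interior cone condition.
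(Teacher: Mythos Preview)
The paper does not actually prove this theorem: it is stated as a quoted result from the literature (Narcowich--Ward, Wendland--Rieger, Narcowich--Ward--Wendland, and finally Le Gia--Narcowich--Ward--Wendland, from which the present formulation is taken verbatim), so there is no ``paper's own proof'' to compare against. Your sketch is the standard proof strategy used in those references --- covering by star-shaped patches of diameter $\sim h$, local polynomial reproduction/norming sets, a scaled Bramble--Hilbert lemma combined with Sobolev embedding, a Markov inequality for the polynomial part, and reassembly with the $\ell^p/\ell^q$ bookkeeping that produces the $(1/p-1/q)_+$ --- and it is essentially correct as outlined.
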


Replacing $u$ with the residual $f - \Pi_{V(X_N)}$ makes the second term in the right hand side vanish, and this idea is used to obtain error bounds for the 
error of the interpolation of $f$ and its derivatives, w.r.t. a suitable $L^q$ norm.

We recall that for these kernels of finite smoothness, several convergence results also for the three greedy algorithms are available. In particular, 
quasi-optimal rates 
have been proven in \cite{SH16b} for the $P$-greedy algorithm, while the results for $f$-greedy (see \cite{Mueller2009}) and for $f/P$-greedy (see 
\cite{Wirtz2013}) 
are optimal only under some restrictive assumptions, and when these are not satisfied there is a significant gap between these proved rates and the one 
observed in numerical experiments. As 
mentioned in the introduction, one of the goals of this paper is to show that under small modifications, also these algorithms can be proven to have worst-case 
optimal 
convergence rates.

\begin{rem}
Although this paper is focused on the interpolation of scalar-valued functions, we would like to mention that most of the results can be extended to deal with 
vector-valued functions. Indeed, it is possible to define a notion of strictly positive definite matrix-valued kernel (see \cite{Micchelli2005}), which allows 
to 
generalize the theory of kernels and native spaces to the case of vectorial functions. Among a vast variety of matrix-valued kernels, a particularly simple 
class is 
given 
by the so-called separable kernels, which are obtained by linear combinations of scalar kernels with positive definite matrices \cite{Wittwar2018}. 

In particular, a very effective and common choice to approximate functions with values in $\R^q$ for some $q\geq 1$ is to consider a matrix valued kernel $K(x, 
y):= k(x, y) I$, where $I$ is the $q\times q$ identity matrix and $k$ is a standard scalar-valued kernel. This choice is one of the fundamental tools of 
the Vectorial Kernel Orthogonal Greedy 
Algorithm (VKOGA) of \cite{Wirtz2013}, which extends the greedy algorithms to vectorial functions by selecting a set of points which is shared over the $q$ 
components, 
thus resulting in an interpolant with fewer centers, which is hence faster to evaluate.

It has been proven in \cite{Wittwar2018} (see Remark 1 and Lemma 3.8 in that paper) that the native space of this matrix-valued kernel $K$ is given by the 
tensor product 
of $q$ copies of the standard native space of the scalar-valued kernel $k$, and in particular that the Power function of $K$ is defined as a vector-valued 
function whose 
components are identical and equal to the usual Power function of $k$. 

Thus, the upper and lower bounds on the scalar-valued Power function and the related results obtained in this paper can immediately be translated to the 
vectorial case by component-wise application, 
if this matrix valued kernel $K:=k I$ is used.
\end{rem}

\section{Stabilized greedy algorithms} \label{sec:restricted_greedy_alg}

As a motivation for the introduction of stabilized greedy algorithms one can observe that the $f/P$-greedy algorithm does not need to be well defined. The 
reason is that there might not exist a maximum of $|f-\Pi_{V(X_N)}(f)|/P_N$ within $\Omega \setminus X_N$. This set is open, so the supremum of 
$|f-\Pi_{V(X_N)}(f)|/P_N$ needs not to be attained. This situation is presented in the following example.

\begin{example}
\label{ex:ill_definedness_fP}
Consider the interval $[0,1]$ and the unscaled linear Mat\'ern kernel $k(x,y) = (1+|x-y|) \cdot \exp(-|x-y|)$, and take 
\begin{align*}
f(x) := -x + x^2 + k(x,0) \stackrel{x \geq 0}{=} -x + x^2 + (1+x) \cdot \exp(-x).
\end{align*}
The maximum of $f$ is attained in $0$, thus the residual $r_1$ is given by
\begin{align*}
r_1(x):= f(x) - s_1(x) = f(x) - f(0) \cdot k(x,0) = -x + x^2.
\end{align*}
The Power function $P_1$ is given by $P_1(x) = \sqrt{1-(1+x)^2 \cdot \exp(-2x)}$. The calculation of $\lim_{x \searrow 0} |r_1|^2/P_1^2$ with l'H{\^o}pital's 
rule 
shows that $\lim_{x \searrow 0} \frac{r_1(x)^2}{P_1(x)^2} = 1$. Furthermore for $0<x\leq 1$ it can be estimated  that $r_1(x)^2/P_1(x)^2 < 1$. Thus $|r_1|/P_1$ 
approaches its supremum for $x \rightarrow 0$, but $0 \notin \Omega \setminus X_1$ since $X_1 = \{ x_1 \} = \{ 0 \}$. Thus the $f/P$ greedy procedure is not 
well defined. 

A plot of the function $f$, the residual $r_1$, the Power functions $P_0, P_1$ and the ratio $|r_1|/P_1$ as well as the first interpolation point $x_1$ is 
given in 
Figure \ref{fig:motivating_example_2}.

\begin{figure}[ht] 		%
\newlength\fwidth
\setlength\fwidth{0.675\textwidth}	%
\centering
\input{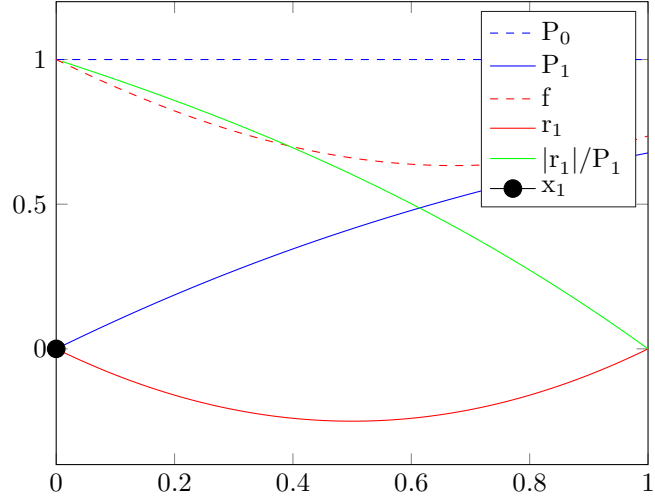}
\caption[Motivating example.]{Motivating example for the $\gamma$-stabilized algorithm based on the linear Mat\'ern kernel: The maximum of $|r_1|/P_1$ is 
attained in $x_1$, which is not element of $\Omega \setminus X_1$. The horizontal axis describes the domain $\Omega = [0,1]$ whereas the vertical axis 
describes the function values.} \label{fig:motivating_example_2}
\end{figure}

\end{example} 

In numerical implementations the point selection is performed on a discretization of the set $\Omega$, such that the algorithm will still work. Nevertheless it 
might still happen that the next point is chosen close to an old interpolation point such that the separation distance drops significantly, which worsens the 
stability of the interpolation procedure. \\ %
One way to circumvent this limitation is given by the \textit{$\gamma$-stabilized} or \textit{$\gamma$-restricted} greedy algorithms, which will be introduced 
in the following definition. 
The notation \textit{restricted} will be clear after the definition, while the notation \textit{stabilized} will be explained in Section \ref{sec:uniformity} 
based on derived stability results. 

\begin{definition}
Let $X_0:=\emptyset$ and $0<\gamma \leq 1$. For every $N\in\N\cup\{0\}$, define $\Omega_\gamma^{(N)} := \{ x \in \Omega: P_{X_N}(x) \geq \gamma \cdot \Vert 
P_{X_N} 
\Vert_\infty \}$. 

Then the $\gamma$-stabilized or $\gamma$-restricted greedy selection criterion is defined as 
\begin{align*}
x_{N+1} = \argmax_{x \in \Omega_\gamma^{(N)}} \eta^{(N)}(x),
\end{align*}
where $\eta^{(N)}: \Omega \rightarrow \mathbb{R}_{\geq 0}$ is some given selection criterion.
\end{definition}

The naming \textit{restricted} of the selection criterion is clearly based on the restriction of the admissible set of points from the set $\Omega$ to $\Omega_\gamma^{(N)}$. 
Moreover in the following, when we want to focus on the restricted set $\Omega_\gamma^{(N)}$ of admissible points while the concrete error indicator 
$\eta^{(N)}$ is not of decisive importance, we will denote this as \textit{any $\gamma$-stabilized algorithm}.

We stress that the definition can be applied to any already existing selection criterion. This yields for example the $\gamma$-stabilized $f/P$-greedy 
algorithm, 
which selects the next 
points according to 
\begin{align*}
x_{N+1} = \argmax_{x \in \Omega_\gamma^{(N)}} \frac{|(f-\Pi_{V(X_N)}(f))(x)|}{P_N(x)}.
\end{align*}

Moreover, for $\gamma=0$ it holds $\Omega_\gamma^{(N)} = \Omega$, thus there is no restriction at all and the algorithm coincides with the standard greedy 
algorithm with selection criterion $\eta^{(N)}$. For example, the $\gamma$-stabilized $f/P$-greedy reduces in this 
case to the usual $f/P$-greedy algorithm. 
That is the reason why the case $\gamma=0$ is excluded from the definition. 

For $\gamma=1$, instead, it holds $\Omega_1^{(N)} = \{ x \in \Omega: P_{X_N}(x) = \Vert P_{X_N} \Vert_\infty \}$, i.e., the selection 
criterion reduces to the usual $P$-greedy criterion.

All together, these stabilized algorithms provide all the intermediate cases between $\gamma=0$ and $\gamma=1$, i.e., they interpolate between standard, known 
methods to 
produce a new scale of selection criteria with a potentially significantly increased flexibility.

\section{Convergence rate of the Power function} \label{sec:conv_rates}

In this section we give two results. First we prove an upper bound on the decay of the maximal value of the Power function by generalizing results of 
\cite{SH16b}. This, using the bound \eqref{eq:power_function_bound}, immediately leads to convergence of the interpolation procedure with the help of any 
$\gamma$-stabilized greedy algorithm, independently of the chosen selection rule. Then we prove a lower bound on the decay of the maximal value of the Power 
function, which implies it cannot drop arbitrarily fast. Since the lower and upper bounds coincide up to constants, we can conclude that they are quasi-optimal.

Note that in the next theorem, and in most of the results of this paper, the bounds contain constants that depend on negative powers of $\gamma$, i.e., 
a smaller restriction parameter yields a larger constant. For the limit $\gamma \rightarrow 0$ this factor tends to infinity, thus no statements on the 
unstabilized algorithms are possible. In all other cases the restriction parameter is positive and a fixed parameter of the problem.

The proof of the following Theorem \ref{th:decay_power_func_restricted} is along the lines of the analysis within \cite{SH16b}, with some modifications 
required to include the effect of the parameter $\gamma$. 

\begin{theorem} \label{th:decay_power_func_restricted}
Assume that $\Omega \subset \mathbb{R}^d$ is a compact domain which satisfies an interior cone condition and has a Lipschitz boundary. Suppose that $k$ is a 
translational invariant kernel of finite smoothness $\tau > d/2$. Then any $\gamma$-stabilized algorithm applied to a function in $f\in\ns$ gives a sequence of 
point sets $X_N \subset \Omega$ such that 
\begin{align}
\label{eq:upper_bound_power_function}
\Vert P_{N} \Vert_{L^\infty(\Omega)} \leq C_P \cdot \gamma^{-2} \cdot N^{\frac{1}{2}-\frac{\tau}{d}}
\end{align}
holds with $C_P = 2^{5\tau/d-1/2} \cdot c_1$. The constant $c_1$ is independent of $\gamma$ and $N$. In particular, it holds
\begin{align}\label{eq:gamma_error_bound}
\Vert f - \Pi_{V(X_N)}(f) \Vert_{L^\infty(\Omega)}
\leq C_P \cdot \gamma^{-2} \cdot N^{\frac{1}{2}-\frac{\tau}{d}} \Vert f \Vert_{\ns}.
\end{align}
\end{theorem}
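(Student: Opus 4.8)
The plan is to prove the bound in a way that is completely independent of the chosen error indicator $\eta^{(N)}$, by recognising any $\gamma$-stabilized algorithm as a \emph{weak greedy} algorithm in the Hilbert space $\ns$. Indeed, since the next point is chosen inside $\Omega_\gamma^{(N)}$, we have, using \eqref{eq:power_function_as_norm},
\begin{align*}
\mathrm{dist}_{\ns}\!\bigl(k(\cdot,x_{N+1}),V(X_N)\bigr) = P_{X_N}(x_{N+1}) \ge \gamma\,\|P_{X_N}\|_{L^\infty(\Omega)} = \gamma \sup_{x\in\Omega}\mathrm{dist}_{\ns}\!\bigl(k(\cdot,x),V(X_N)\bigr),
\end{align*}
no matter what $\eta^{(N)}$ is. Hence $\bigl(k(\cdot,x_{N+1})\bigr)_N$ is precisely a run of the weak greedy algorithm with weakness parameter $\gamma$ on the compact set $\mathcal{K}:=\{k(\cdot,x):x\in\Omega\}\subset\ns$ (compactness follows from compactness of $\Omega$ and continuity of $x\mapsto k(\cdot,x)$, the latter because $\tau>d/2$ makes $\hat\Phi$ integrable and $\Phi$ continuous, so $\|k(\cdot,x)-k(\cdot,y)\|_{\ns}^2=2\Phi(0)-2\Phi(x-y)\to0$), and the associated greedy error after $N$ steps is exactly $\sigma_N(\mathcal{K})=\sup_{x\in\Omega}\mathrm{dist}_{\ns}(k(\cdot,x),V(X_N))=\|P_{X_N}\|_{L^\infty(\Omega)}$. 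I would also record the elementary monotonicity $\|P_{X_{N+1}}\|_{L^\infty(\Omega)}\le\|P_{X_N}\|_{L^\infty(\Omega)}$, which is immediate from $V(X_N)\subset V(X_{N+1})$.

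Next I would bound the Kolmogorov widths $d_n(\mathcal{K})_{\ns}$. For each $n$ pick a quasi-uniform set $Y_n\subset\Omega$ with $|Y_n|=n$ and $h_{Y_n,\Omega}\le c\,n^{-1/d}$; such sets exist because $\Omega$ is bounded and satisfies a cone condition. Using $V(Y_n)$ as a competitor subspace and the classical power function estimate for kernels of finite smoothness $\tau$ (see e.g.\ \cite{Wendland2005, Narcowich2006}), one gets, for $n$ large enough,
\begin{align*}
d_n(\mathcal{K})_{\ns} \le \sup_{x\in\Omega}\mathrm{dist}_{\ns}\!\bigl(k(\cdot,x),V(Y_n)\bigr) = \|P_{Y_n}\|_{L^\infty(\Omega)} \le C\,h_{Y_n,\Omega}^{\tau-d/2} \le C\,c^{\tau-d/2}\,n^{-(\tau/d-1/2)}.
\end{align*}
Together with the trivial bound $\|P_{Y_n}\|_{L^\infty(\Omega)}\le\sqrt{\Phi(0)}$ for the finitely many small values of $n$, this yields $d_n(\mathcal{K})_{\ns}\le c_0\,n^{-\alpha}$ for all $n\ge1$ with $\alpha:=\tau/d-1/2>0$ and $c_0$ independent of $\gamma$ and $N$.

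Finally I would invoke the comparison between greedy errors and Kolmogorov widths in Hilbert spaces, in its weak-greedy form (as in \cite{Binev2011} and \cite{SH16b}), but carrying the dependence on $\gamma$ explicitly. The core is the ``doubling'' estimate: bounding the determinant of the Gram matrix of $N$ selected translates both from below (via the weak greedy property, which produces one factor $\gamma^{-1}$ per step) and from above (via the width estimate), and then applying the arithmetic--geometric mean inequality, one controls a dyadic block $\sigma_{n+1},\dots,\sigma_{2n}$ by a geometric mean of $\sigma_1,\dots,\sigma_n$ and $d_n$. Iterating over $n\mapsto2n\mapsto4n\mapsto\cdots$ and inserting $d_n\le c_0 n^{-\alpha}$ preserves the algebraic rate $n^{-\alpha}$, the geometric sum of the dyadic contributions collapsing the accumulated constants into a factor $2^{5\tau/d-1/2}$ and the two leftover weak-greedy factors into $\gamma^{-2}$; this gives $\|P_{X_N}\|_{L^\infty(\Omega)}=\sigma_N(\mathcal{K})\le 2^{5\tau/d-1/2}\,c_1\,\gamma^{-2}\,N^{1/2-\tau/d}$, i.e.\ \eqref{eq:upper_bound_power_function}. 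The interpolation error bound \eqref{eq:gamma_error_bound} then follows at once by combining this with the standard estimate \eqref{eq:power_function_bound}.

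I expect the main obstacle to be the bookkeeping in this last step: extracting from the weak greedy property exactly \emph{one} factor $\gamma^{-1}$ per step (not two), and then summing the geometric-type series over dyadic ranges so that the resulting constant is the sharp $2^{5\tau/d-1/2}\gamma^{-2}c_1$ rather than something larger. A minor but necessary point is that the quasi-uniform comparison sets and the power function estimate only deliver the width bound beyond a fill-distance threshold, so one must absorb finitely many small-$n$ cases into $c_0$ using $\|P_{Y_n}\|_{L^\infty(\Omega)}\le\sqrt{\Phi(0)}$; this does not affect the asymptotic rate.
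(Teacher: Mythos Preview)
Your proposal is correct and follows essentially the same route as the paper: identify any $\gamma$-stabilized selection as a weak greedy algorithm with parameter $\gamma$ on the set $\{k(\cdot,x):x\in\Omega\}\subset\ns$, bound the Kolmogorov widths algebraically via quasi-uniform comparison sets (this is precisely the content of Lemma~3.1 in \cite{SH16b}), and then invoke the DeVore et al.\ comparison result (Corollary~3.3(ii) in \cite{DeVore2013}) to transfer the rate. The only cosmetic difference is that the paper quotes the DeVore comparison as a black box while you sketch its doubling/dyadic mechanism, but the argument and the resulting constants are the same.
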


\begin{proof}
To prove the decay rate, one does not need the concrete selection criterion, it is only necessary to focus on the constraint given by the $\gamma$-restriction 
to $\Omega_\gamma^{(N)} \equiv \{ x \in \Omega ~ | ~ P_N(x) \geq \gamma \cdot \Vert P_N \Vert_\infty \}$, i.e., the case where in any step a point is chosen 
arbitrarily from the restricted set. %
this case includes the proof for the decay rates of the $\gamma$-stabilized algorithms. \newline
Since the proof is based on a Corollary from \cite{DeVore2013} and a linkage which was used in \cite{SH16b}, an overview will be given in the following:
\begin{enumerate}[label=\roman*.]

\item To begin, we describe the setting within \cite{DeVore2013} which will be called \textit{abstract setting}. Although the cited paper deals with general 
Banach 
spaces, we report the results already adapted to the Hilbert space setting.

Consider a Hilbert space $(\mathcal{H}, \Vert \cdot \Vert)$ and a compact subset $\mathcal{F} \subset \mathcal{H}$ which is assumed to satisfy $\Vert f \Vert 
\leq 1 ~ 
\forall f \in \mathcal{F}$. For elements $f_1, .., f_N \in \mathcal{F}$ define $V_N := \text{span}\{ f_1, .., f_N\} \subset \mathcal{H}$. The question is how 
well does 
the subspace $V_N$ approximate $\mathcal{F}$. For this reason the following two definitions are introduced:
\begin{align*}
\sigma_N :=& \sigma_N(\mathcal{F})_\mathcal{H} := \sup_{f \in \mathcal{F}} \Vert f-\Pi_{V_N}(f) \Vert_\mathcal{H}, \\
d_N :=& d_N(\mathcal{F})_\mathcal{H} := \inf_{\substack{Y \subset \mathcal{H} \\ \text{dim}(Y) = N}} \sup_{f \in \mathcal{F}} \Vert f-\Pi_{Y}(f) 
\Vert_\mathcal{H}.
\end{align*}
In the literature, $d_N(\mathcal{F}_\mathcal{H})$ is usually called the \textit{Kolmogorov N-width}. For $N=0$ we set $Y = \{ 0 \}$ respective $V_N = \{ 0 \}$. 
Moreover, the subscript $\mathcal{H}$ will be mostly dropped, and the quantity $\sigma_N$ is further defined for single elements $f \in \mathcal{H}$ via 
$\sigma_N(f) := 
\sigma_N(\{f\})$. 

The following algorithm, which was called weak greedy algorithm with constant $\gamma$ in a Hilbert space $\mathcal{H}$, was investigated in \cite{DeVore2013}:
\begin{itemize}
\item First step: Choose $f_1 \in \mathcal{F}$ such that $\sigma_0(f_1) = \Vert f_1 \Vert \stackrel{!}{\geq} \gamma \cdot \sigma_0(\mathcal{F})_\mathcal{H}$.
\item Iterative step: Given $\{f_1, .., f_N\}$ already chosen, and defining $V_N := \text{span}\{f_1, .., f_N\}$, 
choose $f_{N+1} \in \mathcal{F}$ such that 
$$\sigma_N(f_{N+1}) = \Vert f_{N+1} - \Pi_{V_N}(f_{N+1}) \Vert \stackrel{!}{\geq} \gamma \cdot \sigma_N(\mathcal{F}).$$
\end{itemize}
The following statement is a result within \cite{DeVore2013}, namely Corollary 3.3, ii). It will be used for this proof:
\begin{itemize}
\item[($\star$)] For the weak greedy algorithm with constant $\gamma$ in a Hilbert space $\mathcal{H}$ we have the following: 
\begin{itemize}
\item[ii)] If $d_N(\mathcal{F}) \leq C_0 \cdot N^{-\alpha} ~  \forall N \in \mathbb{N}$, then it holds $\sigma_N(\mathcal{F}) \leq C_1 \cdot N^{-\alpha} ~ 
\forall N \in \mathbb{N}$ with $C_1 = 2^{5\alpha+1} \cdot \gamma^{-2} \cdot C_0$.
\end{itemize}
\end{itemize}

\item The paper \cite{SH16b} used this previously stated result and applied it to the setting of kernel interpolation, which will be called \textit{kernel 
setting} in 
the following.

For this purpose choose $\mathcal{H} = \ns$ and $\mathcal{F} = \{ k(\cdot, x): x \in \Omega\}$. Without loss of generality, we assume $\Vert k(\cdot, x) \Vert_{\ns} \leq 1$, otherwise consider the kernel normalized with $\Vert k(\cdot, x) \Vert_{\ns} = k(x,x)^{1/2} \equiv \text{const}$. The fact that the choice of $\mathcal{F}$ satisfies the 
requirements can be seen in Lemma 3.1.\ of \cite{SH16b}. The choice of $\mathcal{F} = \{ k(\cdot, x): x \in \Omega \}$ means that any $f = k(\cdot, x) \in \mathcal{F}$ can be uniquely associated with an $x \in \Omega$ and vice versa. This is the key ingredient to link the abstract setting with the kernel setting. Thus it holds $V_N \equiv 
\text{span}\{f_1, .., f_N\} = \text{span}\{k(\cdot, x_i), x_i \in X_N\}$ where $X_N$ is the set of points which correspond to the functions $f_1, .., f_N$ of the abstract setting. Furthermore for $f \in \mathcal{F}$ we get
\begin{align}
\sigma_N(f) \equiv& \Vert f - \Pi_{V_N}(f) \Vert_{\ns} = \Vert k(\cdot, x) - \Pi_{V_N}(k(\cdot, x)) \Vert_{\ns} \nonumber \\
=& P_N(x) \label{eq:linkage1}, \\
\sigma_N(\mathcal{F}_h) \equiv& \sup_{f \in \mathcal{F}_h} \Vert f-\Pi_{V_N}(f) \Vert_{\ns} = \sup_{x \in \Omega} \Vert k(\cdot, x) - \Pi_{V_N}(k(\cdot, x)) 
\Vert_{\ns} \nonumber \\ 
=& \Vert P_N \Vert_{L^\infty(\Omega)} \label{eq:linkage2},
\end{align}
where the representation of the Power function from Equation \eqref{eq:power_function_as_norm} was used.

\item Now we turn to the current algorithm:
\begin{itemize}
\item First step: Choose $f_1 = k(\cdot, x_1) \in \mathcal{F} \equiv \{ k(\cdot, x): x \in \Omega \}$ such that 
$\Vert f_1 \Vert = \Vert k(\cdot, x_1) \Vert 
\geq \gamma \cdot \sigma_0(\mathcal{F}) = \gamma \cdot \Vert P_0 \Vert_{L^\infty(\Omega)}$.
\item Iterative step: $\{f_1, .., f_N\} = \{k(\cdot, x_1), .., k(\cdot, x_N)\} \subset \mathcal{F}$ is already chosen, and $V_N = \text{span}\{f_1, .., f_N\} = 
\text{span}\{k(\cdot, x_1), .., k(\cdot, x_N)\}$. This means that we have already chosen points $\{x_1, .., x_N\} =: X_N$. Now choose $f_{N+1} \in \mathcal{F}$ (associated with $x_{N+1}$) such that
\begin{align*}
\sigma_N(f_{N+1}) \geq& ~ \gamma \cdot \max_{f \in \mathcal{F}} \sigma_N(f) = \gamma \cdot \sigma_N(\mathcal{F}), \\
\stackrel{\eqref{eq:linkage1}}{\Leftrightarrow} P_N(x_{N+1}) \geq& ~ \gamma \cdot \max_{x \in \Omega} P_N(x) = \Vert P_N \Vert_{L^\infty(\Omega)},
\end{align*}
where Equation \eqref{eq:linkage1} was used. Thus weak greedy algorithms with constant $\gamma$ in the Hilbert space $\ns$ exactly correspond to $\gamma$--stabilized greedy kernel algorithms which select any point within $\Omega_\gamma^{(N)}$.
\end{itemize}
Thus the results from the abstract setting can be used, especially the Corollary $(\star)$ which was stated in the end of the first point. The requirements on 
$d_N(\mathcal{F})$ are satisfied due to Lemma 3.1.\ in \cite{SH16b}, which states $d_N(\mathcal{F}) \leq c_1 \cdot N^{-\alpha_1}$. Thus the same decay can be 
concluded for any $\gamma$-stabilized greedy algorithm. In detail, using the stated decay rates for $d_N(\mathcal{F})$ from Lemma 3.1. in \cite{SH16b} it holds 
for kernels $k$ with finite smoothness $\tau > d/2$:
\begin{align*}
\Vert P_N \Vert_{L^\infty(\Omega)} \leq C_P \cdot \gamma^{-2} \cdot N^{\frac{1}{2}-\frac{\tau}{d}}
\end{align*}
with $C_P = 2^{5\tau/d-1/2} \cdot c_1$ (since $\alpha = \tau/d - 1/2$).
\end{enumerate}
Thus Inequality \eqref{eq:upper_bound_power_function} is proven and an application of \eqref{eq:power_function_bound} yields the bound 
\eqref{eq:gamma_error_bound}.
\end{proof}

\begin{rem}
Observe that Theorem \ref{th:decay_power_func_restricted} is formulated in terms of a fixed function $f\in\ns$. This is done since a general selection rule 
$\eta^{(N)}$ 
may be function dependent, and thus a target function needs to be specified. Nevertheless, the error bound \eqref{eq:gamma_error_bound} is based on the Power 
function 
only, and thus it applies also to any $g\in\calh$ with $g\neq f$.

Moreover, if a function independent selection rule is used, there is no need to specify target values to run the corresponding $\gamma$-stabilized version of 
the 
algorithm.
\end{rem}

Theorem \ref{th:decay_power_func_restricted} is the only result of this paper that applies not only to the class of kernels of finite smoothness, and it can 
indeed be extended to any kernel for which there exists known algebraic or exponential convergence results for fixed point distributions, which can be 
transfered to the ones of the $\gamma$-greedy Power function. We refer to \cite{SH16b} for more details on this idea, and we mention here only the following 
additional result, which covers the case of the Gaussian kernel.
\begin{theorem}
Assume that $\Omega \subset \mathbb{R}^d$ is a compact domain which satisfies an interior cone condition and has a Lipschitz boundary. Suppose that $k$ is a 
translational invariant kernel of infinite smoothness like the Gaussian kernel. Then any $\gamma$-stabilized algorithm applied to a function in $f\in\ns$ gives 
a sequence of point sets $X_N 
\subset \Omega$ such that 
\begin{align*}
\Vert P_{N} \Vert_{L^\infty(\Omega)} \leq c_2 \cdot \gamma^{-1} \cdot e^{-c_3 \cdot N^{-1/d}}
\end{align*}
holds. The constants $c_2$ and $c_3$ are independent of $\gamma$ and $N$. In particular, we obtain
\begin{align}
\Vert f - \Pi_{V(X_N)}(f) \Vert_{L^\infty(\Omega)}
\leq c_2 \cdot \gamma^{-1} \cdot e^{-c_3 \cdot N^{-1/d}} \Vert f \Vert_{\ns}.
\end{align}
\end{theorem}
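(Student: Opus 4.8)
The plan is to mirror the proof of Theorem~\ref{th:decay_power_func_restricted} exactly, replacing the algebraic decay rate for the Kolmogorov $N$-width by the exponential one that is available for kernels of infinite smoothness. Concretely, I would keep the same abstract setting of \cite{DeVore2013}, with $\mathcal{H} = \ns$ and $\mathcal{F} = \{k(\cdot,x) : x \in \Omega\}$ (after normalizing the kernel so that $k(x,x)^{1/2} \equiv \mathrm{const} \leq 1$), and reuse the linkages \eqref{eq:linkage1}--\eqref{eq:linkage2} that identify $\sigma_N(k(\cdot,x)) = P_N(x)$ and $\sigma_N(\mathcal{F}) = \Vert P_N\Vert_{L^\infty(\Omega)}$. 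As before, running any $\gamma$-stabilized greedy algorithm that picks a point in $\Omega_\gamma^{(N)}$ is precisely an instance of the weak greedy algorithm with constant $\gamma$ in the Hilbert space $\ns$, so the abstract convergence results apply verbatim.

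The key difference is which part of Corollary~3.3 of \cite{DeVore2013} I invoke. For algebraic $d_N$-decay one uses part ii) as in the proof above; for exponential decay of the Kolmogorov width one instead uses the corresponding statement (Corollary~3.3~iii) of \cite{DeVore2013}), which says that if $d_N(\mathcal{F}) \leq C_0 e^{-c_0 N^{\beta}}$ for all $N$, then $\sigma_N(\mathcal{F}) \leq \sqrt{2 C_0}\, \gamma^{-1} e^{-c_4 N^{\beta}}$ for all $N$, with $c_4 = 2^{-1-2\beta} c_0$ (the precise constants are not important, only that the resulting prefactor scales like $\gamma^{-1}$ rather than $\gamma^{-2}$, and that the exponential rate is preserved up to a constant in the exponent). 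Thus the two steps to assemble are: first, quote from \cite{SH16b} (and the references therein on interpolation by uniform points with analytic kernels such as the Gaussian) that $d_N(\mathcal{F}) \leq c\, e^{-c' N^{1/d}}$ for some constants $c, c' > 0$; second, feed this into the exponential branch of the weak-greedy estimate to obtain $\Vert P_N\Vert_{L^\infty(\Omega)} = \sigma_N(\mathcal{F}) \leq c_2\, \gamma^{-1} e^{-c_3 N^{1/d}}$. The error bound then follows immediately by inserting this into \eqref{eq:power_function_bound}.

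The main obstacle — which is really just a bookkeeping point rather than a genuine difficulty — is ensuring that the decay bound on the Kolmogorov $N$-width in the form $d_N(\mathcal{F}) \leq C_0 e^{-c_0 N^{\beta}}$ is actually what is available in the literature for the Gaussian kernel: the classical spectral/approximation results for analytic RBFs (e.g.\ in \cite{Wendland2005} and the references in \cite{SH16b}) are typically phrased as $\Vert P_h\Vert_\infty \lesssim e^{-c/h}$ in terms of the fill distance $h$, and converting this into a bound in terms of $N$ uses $h \asymp N^{-1/d}$ for quasi-uniform point sets, which gives the exponent $\beta = 1/d$ stated in the theorem. One must also be slightly careful that $d_N(\mathcal{F})$ — an infimum over all $N$-dimensional subspaces — is bounded by $\sigma_N$ for a well-chosen point set, so that the fill-distance bounds transfer to the width; this is exactly the content of Lemma~3.1 of \cite{SH16b} in the finite-smoothness case and carries over to the infinite-smoothness case with no change. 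With these identifications in place, the proof is a direct transcription of the one above with part iii) of the cited corollary replacing part ii).
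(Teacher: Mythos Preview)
Your proposal is correct and matches the paper's own approach. The paper does not actually write out a separate proof for this theorem: it explicitly presents the statement as the exponential analogue of Theorem~\ref{th:decay_power_func_restricted}, obtained by the same reduction to the weak greedy framework of \cite{DeVore2013} combined with the exponential Kolmogorov width bound from \cite{SH16b}, which is precisely what you outline (including the use of Corollary~3.3~iii) in place of ii) and the resulting $\gamma^{-1}$ rather than $\gamma^{-2}$ prefactor). You also implicitly corrected the apparent typo in the exponent: the bound should read $e^{-c_3 N^{1/d}}$, not $e^{-c_3 N^{-1/d}}$, for it to be a decaying quantity.
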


The next Theorem \ref{th:lower_bound_power_func} gives a lower bound on the decay of the maximal value of the Power function. The proof requires the following 
preliminary Lemma \ref{lemma:SchabackDeMarchiEstimate}, which estimates the native space norm of a bump function. It is part of a proof within \cite[Theorem 
1]{DeMarchi2010}, where we corrected a minor mistake in an exponent. 

\begin{lemma}
\label{lemma:SchabackDeMarchiEstimate}
Let $\Psi \in C^\infty(\R^d)$ be a bump function, i.e., having support within the unit ball and satisfying $\Psi(0)=1, \Vert \Psi \Vert_{L^\infty(\R^d)} = 1$. 
Consider 
a bounded domain $\Omega \subset \mathbb{R}^d$ satisfying an interior cone condition and a native space which is norm equivalent to $W_2^\tau(\Omega)$ with 
$\tau > 
\frac{d}{2}$. Then the native space norm of a rescaled version of $\Psi$ with $0 < p \leq 1$ can be estimated as 
\begin{align*}
\left \Vert \Psi \left( \frac{\cdot}{p} \right) \right \Vert_{\ns} \leq C \cdot p^{\frac{d}{2}-\tau} \cdot \Vert \Psi \Vert_{\ns}.
\end{align*}
\end{lemma}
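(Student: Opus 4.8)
The plan is to move everything to a Sobolev norm and reduce the claim to a pure scaling identity on the Fourier side. Since $\Psi(\cdot/p)$ is supported in the ball of radius $p\le 1$ around the origin, it lies in $C_c^\infty$; assuming w.l.o.g. that the supports of all rescalings $\Psi(\cdot/p)$, $0<p\le 1$, are contained in $\Omega$ (e.g.\ $B(0,1)\subseteq\Omega$; otherwise translate the bump, which the interior cone condition makes possible), each such function lies in $C_c^\infty(\Omega)\subset W_2^\tau(\Omega)\subset\ns$. Using that the native space of a kernel satisfying \eqref{eq:asymptotic_fourier_transform} is norm equivalent to $W_2^\tau$, and that for a function with compact support inside $\Omega$ the extension by zero to $\R^d$ changes the $W_2^\tau$-norm by at most a fixed constant (for the integer part $k$ of $\tau=k+s$ this is an isometry, and for the fractional Gagliardo seminorm the extra cross term is controlled by the distance of the support to $\partial\Omega$), I would work throughout with $\Vert\cdot\Vert_{W_2^\tau(\R^d)}$, fixing constants $0<c_-\le c_+$ with $c_-\Vert u\Vert_{W_2^\tau(\R^d)}\le\Vert u\Vert_{\ns}\le c_+\Vert u\Vert_{W_2^\tau(\R^d)}$ for functions of this type.

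The heart of the argument is then a short scaling estimate. Writing $\Psi_p:=\Psi(\cdot/p)$, I would use $\widehat{\Psi_p}(\omega)=p^d\,\widehat{\Psi}(p\omega)$ and the substitution $\xi=p\omega$ to obtain
\begin{align*}
\Vert\Psi_p\Vert_{W_2^\tau(\R^d)}^2
=\int_{\R^d}(1+\Vert\omega\Vert_2^2)^\tau\,p^{2d}\,|\widehat{\Psi}(p\omega)|^2\,d\omega
=p^d\int_{\R^d}\Big(1+\frac{\Vert\xi\Vert_2^2}{p^2}\Big)^{\!\tau}|\widehat{\Psi}(\xi)|^2\,d\xi .
\end{align*}
For $0<p\le 1$ one has $1+\Vert\xi\Vert_2^2/p^2=p^{-2}(p^2+\Vert\xi\Vert_2^2)\le p^{-2}(1+\Vert\xi\Vert_2^2)$, and since $\tau>0$ this gives $\big(1+\Vert\xi\Vert_2^2/p^2\big)^\tau\le p^{-2\tau}(1+\Vert\xi\Vert_2^2)^\tau$, hence
\begin{align*}
\Vert\Psi_p\Vert_{W_2^\tau(\R^d)}^2\le p^{d-2\tau}\int_{\R^d}(1+\Vert\xi\Vert_2^2)^\tau|\widehat{\Psi}(\xi)|^2\,d\xi=p^{d-2\tau}\,\Vert\Psi\Vert_{W_2^\tau(\R^d)}^2 .
\end{align*}
Taking square roots yields $\Vert\Psi_p\Vert_{W_2^\tau(\R^d)}\le p^{d/2-\tau}\Vert\Psi\Vert_{W_2^\tau(\R^d)}$, which carries exactly the exponent claimed in the lemma.

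Finally I would chain the two norm equivalences with this bound,
\begin{align*}
\Vert\Psi_p\Vert_{\ns}\le c_+\Vert\Psi_p\Vert_{W_2^\tau(\R^d)}\le c_+\,p^{d/2-\tau}\Vert\Psi\Vert_{W_2^\tau(\R^d)}\le \frac{c_+}{c_-}\,p^{d/2-\tau}\Vert\Psi\Vert_{\ns},
\end{align*}
which gives the statement with $C=c_+/c_-$ (times the fixed constant from the zero-extension step). The only genuinely delicate point — plausibly the place where the exponent slip in the original argument of \cite{DeMarchi2010} occurred — is the bookkeeping between $\ns$, the bounded domain $\Omega$, and the whole-space Sobolev space: one has to make sure that $\Psi$ and all of its rescalings may legitimately be regarded as compactly supported inside $\Omega$, so that restriction and zero-extension are simultaneously norm-controlled and the clean $\R^d$-Fourier computation applies. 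Everything after that reduction is the elementary scaling above.
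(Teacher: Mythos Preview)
Your argument is correct: the Fourier-side scaling computation is clean and gives exactly the exponent $d/2-\tau$, and the chain of norm equivalences is sound. The one place to be slightly more careful is the direction $\Vert\Psi\Vert_{W_2^\tau(\R^d)}\le c_-^{-1}\Vert\Psi\Vert_{\ns}$, which requires that zero-extension from $\Omega$ to $\R^d$ is bounded on $W_2^\tau$; this holds for functions supported in a fixed compact subset of $\Omega$ (here $B(0,1)$), so the constant is indeed independent of $p$. You flag this correctly at the end.

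Regarding comparison with the paper: the paper does not actually supply a proof of this lemma. It states the result and attributes it to the proof of Theorem~1 in \cite{DeMarchi2010}, noting only that a minor exponent error there has been corrected. The argument in that reference is essentially the same Fourier scaling computation you give, carried out directly in the native space on $\R^d$ via the weight $\hat\Phi(\omega)^{-1}\asymp(1+\Vert\omega\Vert_2^2)^\tau$ from \eqref{eq:asymptotic_fourier_transform}, rather than passing through $W_2^\tau$ first. Your route via the Sobolev norm equivalence is equivalent and arguably makes the role of the exponent more transparent; both approaches reduce to the same substitution $\xi=p\omega$ and the same elementary bound $p^2+\Vert\xi\Vert_2^2\le 1+\Vert\xi\Vert_2^2$ for $p\le 1$.
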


Now the following Theorem \ref{th:lower_bound_power_func} states a lower bound on the decay of the Power function. The result holds in general for any 
distribution of the 
points and hence especially for the point distribution created by any greedy algorithm.

\begin{theorem} \label{th:lower_bound_power_func}
Assume that $k$ is a translational invariant kernel with finite smoothness $\tau > d/2$. Let $\Omega \subset \mathbb{R}^d$ be bounded, $(X_N)_{N \in 
\mathbb{N}}$ 
be a sequence of sets of points within $\Omega$. Then there exists a constant $c_P > 0$ such that
\begin{align*}
\Vert P_N \Vert_{L^\infty(\Omega)} \geq c_P \cdot N^{\frac{1}{2}-\frac{\tau}{d}}.
\end{align*}
\end{theorem}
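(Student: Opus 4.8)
The plan is to obtain a lower bound on $\Vert P_N\Vert_{L^\infty(\Omega)}$ by exhibiting, for each $N$, a point $x^\ast\in\Omega$ at which $P_N(x^\ast)$ cannot be too small, using the characterization of the Power function from \eqref{eq:power_function_via_sup} as the norm of the pointwise error functional. Concretely, by Theorem \ref{th:bounds_fill_sep_dist} the fill distance of any $N$-point set in the bounded domain $\Omega$ satisfies $h_N\geq c_\Omega N^{-1/d}$, so there exists a point $x^\ast\in\Omega$ whose distance to the nearest point of $X_N$ is at least $p:=h_N/2\geq (c_\Omega/2)N^{-1/d}$; hence the ball $B(x^\ast,p)$ contains no point of $X_N$. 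I would then plug a suitable test function into \eqref{eq:power_function_via_sup}: take the bump function $\Psi\in C^\infty(\mathbb R^d)$ of Lemma \ref{lemma:SchabackDeMarchiEstimate}, rescale and translate it to $g(\cdot):=\Psi\bigl(\tfrac{\cdot-x^\ast}{p}\bigr)$. Since $\mathrm{supp}(g)\subset B(x^\ast,p)$ is disjoint from $X_N$, we have $g(x_i)=0$ for all $x_i\in X_N$, so the interpolant $\Pi_{V(X_N)}(g)$ vanishes identically (it depends only on the nodal values), while $g(x^\ast)=\Psi(0)=1$. Therefore
\begin{align*}
P_N(x^\ast)\;\geq\;\frac{|g(x^\ast)-\Pi_{V(X_N)}(g)(x^\ast)|}{\Vert g\Vert_{\ns}}\;=\;\frac{1}{\Vert g\Vert_{\ns}}.
\end{align*}

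It remains to bound $\Vert g\Vert_{\ns}$ from above. This is exactly the content of Lemma \ref{lemma:SchabackDeMarchiEstimate} (translation invariance of the kernel, hence of the native space norm, makes the shift by $x^\ast$ harmless, noting $x^\ast\in\Omega$ and the rescaled bump is supported in $\Omega$ once $p$ is small enough, i.e.\ $N$ large enough): since $\tau>d/2$ and $0<p\leq 1$,
\begin{align*}
\Vert g\Vert_{\ns}\;=\;\Bigl\Vert\Psi\Bigl(\tfrac{\cdot-x^\ast}{p}\Bigr)\Bigr\Vert_{\ns}\;\leq\;C\,p^{\frac d2-\tau}\,\Vert\Psi\Vert_{\ns}.
\end{align*}
Combining the two displays and substituting $p=h_N/2\geq (c_\Omega/2)N^{-1/d}$ gives
\begin{align*}
\Vert P_N\Vert_{L^\infty(\Omega)}\;\geq\;P_N(x^\ast)\;\geq\;\frac{p^{\tau-d/2}}{C\Vert\Psi\Vert_{\ns}}\;\geq\;\frac{(c_\Omega/2)^{\tau-d/2}}{C\Vert\Psi\Vert_{\ns}}\,N^{-(\tau-d/2)/d}\;=\;c_P\cdot N^{\frac12-\frac\tau d},
\end{align*}
with $c_P:=(c_\Omega/2)^{\tau-d/2}/(C\Vert\Psi\Vert_{\ns})>0$, which is the claimed estimate. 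The exponent $\tau-d/2>0$ makes $p^{\tau-d/2}$ a genuine lower bound, and $c_P$ is independent of $N$ and of the particular sequence $(X_N)$, so in particular the bound applies to the points produced by any greedy algorithm.

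The main obstacle is a technical one: ensuring that the translated, rescaled bump $g$ actually lies in $\ns=W_2^\tau(\Omega)$ and that its norm is controlled uniformly. For this one needs $B(x^\ast,p)\subset\Omega$ so that $g|_\Omega$ is a rescaled bump supported inside $\Omega$ — this holds for $N$ large enough (and for small $N$ the asymptotic inequality can be absorbed into the constant by shrinking $c_P$), though one may prefer instead to pick $x^\ast$ slightly inside $\Omega$ using the interior cone condition. One must also confirm that Lemma \ref{lemma:SchabackDeMarchiEstimate}, stated for a bump centered at the origin, transfers verbatim to a bump centered at $x^\ast$: this is immediate from the translation invariance of $k$ and the resulting translation invariance of $\Vert\cdot\Vert_{\ns}$ on $\mathbb R^d$, combined with the norm equivalence $\ns\simeq W_2^\tau(\Omega)$ and the fact that restriction $W_2^\tau(\mathbb R^d)\to W_2^\tau(\Omega)$ is bounded. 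Apart from bookkeeping these constants, everything else is a direct chain of the already-established estimates.
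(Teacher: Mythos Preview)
Your proof is correct and follows essentially the same approach as the paper: insert a rescaled, translated bump function supported away from $X_N$ into the variational characterization \eqref{eq:power_function_via_sup}, observe that its interpolant vanishes, bound its native space norm via Lemma~\ref{lemma:SchabackDeMarchiEstimate}, and conclude using the lower bound $h_N\geq c_\Omega N^{-1/d}$ from Theorem~\ref{th:bounds_fill_sep_dist}. The only cosmetic difference is that you take the scaling parameter $p=h_N/2$ to guarantee $B(x^\ast,p)\cap X_N=\emptyset$, whereas the paper takes $p=h_N$ and handles the fact that the supremum defining $h_N$ need not be attained via an $\epsilon$-adjustment in a footnote; your choice is arguably cleaner and only changes the constant.
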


\begin{proof}
The setting of Lemma \ref{lemma:SchabackDeMarchiEstimate} with $p := h_N$ is considered. Thus the following inequality on the native space norm of a bump 
function rescaled and shifted to some point $\tilde{x} \in \Omega$  holds:
\begin{align}
\label{eq:estimate_within_proof_lower_bound}
\frac{1}{\left \Vert \Psi \left( \frac{\cdot - \tilde{x}}{h_N} \right) \right \Vert_{\ns}} \geq \frac{1}{C} \cdot \frac{h_N^{\tau - d/2}}{\Vert \Psi 
\Vert_{\ns}}.
\end{align}

Since the bump function is smooth and the native space is norm equivalent to a Sobolev space, the bump function and any rescaled and shifted version of it are 
in the native space. Moreover, if we fix a set $X_N\subset\Omega$ of points, since the rescaled function $\Psi_{h_N, \tilde{x}} := \Psi((\cdot - \tilde{x}) / 
h_N)$ has only support within a ball of radius $h_N$, it can be placed among the sampling points such that no sampling point lies in the interior of the 
support of $\Psi_{h_N, \tilde{x}}$ \footnote{In fact it is needed to rescale with $(1+\epsilon)h_N$ with $\epsilon>0$ fixed, e.g. $\epsilon = 0.01$. The reason is that the supremum within the definition of the fill distance needs not to be attained. This means we choose $\tilde{x} \in \Omega$ such that $h_N \equiv \sup_{x \in \Omega} \min_{x_i \in X_N} \Vert x-x_i \Vert_2 \leq (1+\epsilon) \cdot \min_{x_i \in X_N} \Vert \tilde{x} - x_i \Vert_2$. For the sake of simplicity we drop these technical details here.}. 
Thus the interpolant to $\Psi_{h_N, \tilde{x}}$ is the zero function. 
Therewith one can conclude:
\begin{align*}
P_N(x) =& \sup_{0 \neq f \in \ns} \frac{|(f-\Pi_{V(X_N)}(f))(x)|}{\Vert f \Vert_{\ns}} \geq \frac{\Psi_{h_N, \tilde{x}}(x) - \Pi_{V(X_N)}(\Psi_{h_N, 
\tilde{x}})(x)}{\Vert \Psi_{h_N, \tilde{x}} \Vert_{\ns}} \\
=& ~ \frac{\Psi_{h_N, \tilde{x}}(x)}{\Vert \Psi_{h_N, \tilde{x}} \Vert_{\ns}} \\
\Rightarrow \Vert P_N \Vert_\infty =& \sup_{x \in \Omega} \frac{\Psi_{h_N, \tilde{x}}(x)}{\Vert \Psi_{h_N, \tilde{x}} \Vert_{\ns}} = \frac{1}{\Vert \Psi_{h_N, 
\tilde{x}} \Vert_{\ns}} 
\stackrel{(\ref{eq:estimate_within_proof_lower_bound})}{\geq} \frac{1}{C} \cdot \frac{h_N^{\tau - d/2}}{\Vert \Psi \Vert_{\ns}}.
\end{align*}
Using the lower bound on the fill distance $h_N \geq c_\Omega \cdot N^{-1/d}$ from Theorem \ref{th:bounds_fill_sep_dist} and remembering that $\tau > d/2 
\Leftrightarrow 
\tau - d/2 > 0$ finally yields 
\begin{align*}
\Vert P_N \Vert_\infty \geq& ~ \frac{1}{C \cdot \Vert \Psi \Vert_{\ns}} \cdot c_\Omega^{\tau-d/2} \cdot \left( N^{-1/d} \right)^{\tau - d/2} = ~ 
\frac{c_\Omega^{\tau - d/2}}{C \cdot \Vert \Psi \Vert_{\ns}} \cdot N^{\frac{1}{2}-\frac{\tau}{d}}.
\end{align*}
Setting $c_P := c_\Omega^{\tau-d/2} \cdot C^{-1} \cdot \Vert \Psi \Vert_{\ns}^{-1}$ gives the desired result.
\end{proof}

This result can be applied to the case of the $\gamma$-stabilized greedy algorithms, i.e., to the sequence of points obtained by such an algorithm. Thus it 
gives a lower 
bound on the decay of the Power function. If one compares this lower bound with the upper bound within Theorem \ref{th:decay_power_func_restricted}, it is 
clear that the 
decay rates coincide. Thus, as mentioned in the beginning of this section, both results give exact rates and cannot be improved up to constants. Especially, 
this means that the optimal decay rate for the maximal value 
of the Power function can be achieved with greedy methods, for example with the proposed $\gamma$-stabilized greedy method.

\section{Uniformity of the selected points} \label{sec:uniformity}
Based on the previous results some statements about the point distribution of the selected points using kernels of finite smoothness $\tau$ can be derived. The 
first subsection gives an upper estimate on the fill distance, whereas the second subsection derives lower estimates on the separation distance. These results 
prove that the points selected by any $\gamma$-stabilized greedy algorithm are asymptotically uniformly distributed.

\subsection{Estimate on the fill distance}
For a first result on the fill distance we use the following Lemma \ref{lemma:estimate_for_fill_dist}, which is a refinement of Theorem 3.1 in 
\cite{DeMarchi2010}. 

\begin{lemma}
\label{lemma:estimate_for_fill_dist}
Let $\Omega$ be a compact domain in $\mathbb{R}^d$ satisfying an interior cone condition. Suppose that the kernel $k$ is a translational invariant kernel with 
finite smoothness $\tau$. Then there exists a constant $M>0$ with the following property: If $\epsilon > 0$ and $X_N = \{x_1, .., x_N\} \subset \Omega$ are 
given 
such that
\begin{align} \label{eq:epsilon_estimate}
\Vert f - \Pi_{V(X_N)}(f) \Vert_{L^{\infty}} \leq \epsilon \cdot \Vert f \Vert_{\ns} ~ \text{for all} ~ f \in \ns,
\end{align}
then the fill distance of $X_N$ satisfies
\begin{align*}
h_{X_N} \leq M \cdot \epsilon^{1/(\tau - d/2)}.
\end{align*}
\end{lemma}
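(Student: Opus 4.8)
The strategy is to invoke Theorem~\ref{th:estimate_derivatives_wendland} in the special case $q = \infty$, $m = 0$, $p = 2$, applied to the residual $u := f - \Pi_{V(X_N)}(f)$. With this choice, the term $\Vert u|_{X_N} \Vert_\infty$ vanishes identically, because the interpolant agrees with $f$ on all of $X_N$. What remains is an estimate of the form
\begin{align*}
\Vert f - \Pi_{V(X_N)}(f) \Vert_{L^\infty(\Omega)} \leq C \cdot h_{X_N}^{\tau - d/2} \cdot |f - \Pi_{V(X_N)}(f)|_{W_2^\tau(\Omega)},
\end{align*}
where I have used $d(1/p - 1/q)_+ = d(1/2 - 0)_+ = d/2$. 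So far this bounds the error \emph{from above} by a power of the fill distance, which is the opposite direction of what the Lemma claims. The point of the Lemma is the converse: if the error is \emph{small} (bounded by $\epsilon$ times the native space norm, uniformly over $\ns$), then $h_{X_N}$ must be correspondingly small. To get this, I would argue by contradiction or, more cleanly, by exhibiting a specific test function.

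The cleanest route reuses the bump-function construction from the proof of Theorem~\ref{th:lower_bound_power_func}. Namely, given $X_N$ with fill distance $h := h_{X_N}$, place a rescaled bump $\Psi_{h,\tilde x} := \Psi((\cdot - \tilde x)/h)$ (using the technical $(1+\epsilon)h$ rescaling from that footnote if necessary) at a point $\tilde x$ realizing the fill distance, so that no sampling point lies in the interior of its support. Then $\Pi_{V(X_N)}(\Psi_{h,\tilde x}) = 0$, and since $\Psi(0) = 1$ we have $\Vert \Psi_{h,\tilde x} - \Pi_{V(X_N)}(\Psi_{h,\tilde x}) \Vert_{L^\infty} \geq |\Psi_{h,\tilde x}(\tilde x)| = 1$. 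Applying hypothesis \eqref{eq:epsilon_estimate} to $f = \Psi_{h,\tilde x}$ gives $1 \leq \epsilon \cdot \Vert \Psi_{h,\tilde x} \Vert_{\ns}$. Now invoke Lemma~\ref{lemma:SchabackDeMarchiEstimate} (valid since $h$ is small, and wlog $h \leq 1$; if $h > 1$ the claimed bound is trivial for a suitably large $M$) to bound $\Vert \Psi_{h,\tilde x} \Vert_{\ns} \leq C \cdot h^{d/2 - \tau} \cdot \Vert \Psi \Vert_{\ns}$. Combining, $1 \leq \epsilon \cdot C \cdot h^{d/2 - \tau} \cdot \Vert \Psi \Vert_{\ns}$, and solving for $h$ (recall $\tau - d/2 > 0$) yields $h \leq M \cdot \epsilon^{1/(\tau - d/2)}$ with $M := (C \cdot \Vert \Psi \Vert_{\ns})^{1/(\tau - d/2)}$, which depends only on $\Omega$, $k$, and the fixed reference bump $\Psi$, as required.

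The main obstacle — and the reason the statement is phrased as a "refinement" of the cited result — is keeping track of which constants are allowed to depend on what: $M$ must be independent of $\epsilon$, of $N$, and of the particular point set $X_N$. This forces one to fix a single reference bump function $\Psi$ once and for all (so $\Vert \Psi \Vert_{\ns}$ is a genuine constant of the problem, not something that grows), and to make sure the constant $C$ from Lemma~\ref{lemma:SchabackDeMarchiEstimate} is uniform over all placements $\tilde x \in \Omega$ and all scales $0 < h \leq 1$ — which it is, since that lemma's $C$ depends only on $\Omega$, $d$, and $\tau$ via the Sobolev norm equivalence and the interior cone condition. A secondary technical point is the "sufficiently small $h$" caveat (needed both for Lemma~\ref{lemma:SchabackDeMarchiEstimate} and implicitly for the bump to fit inside $\Omega$ near the boundary); this is handled by noting the conclusion is vacuous or trivial when $h$ is not small, at the cost of possibly enlarging $M$.
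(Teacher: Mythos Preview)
Your proposal is correct and, after the initial detour through Theorem~\ref{th:estimate_derivatives_wendland} (which you rightly abandon as pointing the wrong way), follows exactly the paper's own argument: place a rescaled bump at a point realizing the fill distance so that its interpolant vanishes, apply hypothesis~\eqref{eq:epsilon_estimate} to get $1 \leq \epsilon \cdot \Vert \Psi_{h,\tilde x} \Vert_{\ns}$, invoke Lemma~\ref{lemma:SchabackDeMarchiEstimate}, and solve for $h$ --- arriving at the identical constant $M = (C \cdot \Vert \Psi \Vert_{\ns})^{1/(\tau - d/2)}$. Your additional remarks on the uniformity of $M$ and the handling of the large-$h$ regime are careful and go slightly beyond what the paper makes explicit.
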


\begin{proof}
The same argumentation as in the proof of Theorem \ref{th:lower_bound_power_func} is used, i.e., the bump function $\Psi$ from Lemma 
\ref{lemma:SchabackDeMarchiEstimate} 
as well as its rescaled and shifted function $\Psi_{h_N} \equiv \Psi((\cdot - \tilde{x})/ h_N)$ are considered. 
The interpolant $\Pi_{V(X_N)}(\Psi_{h_N})$ to $\Psi_{h_N}$ is again the zero function, so we can conclude
\begin{align*}
1 &= \Vert \Psi_{h_N} \Vert_{L^{\infty}} = \Vert \Psi_{h_N} - \Pi_{V(X_N)}(\Psi_{h_N}) \Vert_\infty \leq \epsilon \cdot \Vert \Psi_{h_N} \Vert_{\ns} \\
&\leq C \cdot \epsilon \cdot h_N^{d/2-\tau} \cdot \Vert \Psi \Vert_{\ns}.
\end{align*}
where in the last line Lemma \ref{lemma:SchabackDeMarchiEstimate} was used. This can be rearranged to conclude the result
\begin{align*}
h_N^{\tau - d/2} &\leq C \cdot \epsilon \cdot \Vert \Psi \Vert_{\ns} \\
\Leftrightarrow ~~~~~~~~~ h_N &\leq M \cdot \epsilon^{1/(\tau-d/2)}
\end{align*}
with $M:=(C \cdot \Vert \Psi \Vert_{\ns})^{1/(\tau - d/2)}$.
\end{proof}

\begin{rem}
We remark that the original proof of Theorem 3.1 in \cite{DeMarchi2005} provided, under the same assumptions, a bound $h_{X_N} \leq M_\alpha \cdot 
\epsilon^{1/(\alpha - d/2)}$ for all $\alpha>\tau$, though not for $\alpha = \tau$.
\end{rem}

Observe that any bound on the Power function yields a way to satisfy the condition stated in Inequality 
\eqref{eq:epsilon_estimate} with $\epsilon = \Vert P_N \Vert_{L^\infty(\Omega)}$. This can be combined with the upper bound on the decay of the Power function 
from Inequality \eqref{eq:upper_bound_power_function} to conclude the following estimate on the fill distance.

\begin{theorem} \label{th:estimate_fill_dist}
Assume that $k$ is a translational invariant kernel with finite smoothness $\tau$. Then there exists a constant $c > 0$ with the following property: Any 
$\gamma$-stabilized algorithm applied to a function on a compact set $\Omega \subset \mathbb{R}^d$ which satisfies an interior cone condition and has a 
Lipschitz boundary gives a sequence of point sets $X_N \subset \Omega$ such that it holds
\begin{align*}
h_N \leq c \cdot \gamma^{-2/(\tau-d/2)} \cdot N^{-1/d}.
\end{align*}
\end{theorem}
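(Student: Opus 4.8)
The plan is to combine the upper bound on the Power function from Theorem \ref{th:decay_power_func_restricted} with the fill distance estimate of Lemma \ref{lemma:estimate_for_fill_dist}. First I would recall that, by the standard Power function bound \eqref{eq:power_function_bound}, one has $\Vert f - \Pi_{V(X_N)}(f) \Vert_{L^\infty(\Omega)} \leq \Vert P_N \Vert_{L^\infty(\Omega)} \cdot \Vert f \Vert_{\ns}$ for every $f \in \ns$. Hence the hypothesis \eqref{eq:epsilon_estimate} of Lemma \ref{lemma:estimate_for_fill_dist} is satisfied with the choice $\epsilon := \Vert P_N \Vert_{L^\infty(\Omega)}$, for the point set $X_N$ produced by any $\gamma$-stabilized algorithm. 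Note that the assumptions imposed on $\Omega$ (compact, interior cone condition, Lipschitz boundary) are precisely those required by both Theorem \ref{th:decay_power_func_restricted} and Lemma \ref{lemma:estimate_for_fill_dist}, and that $\tau > d/2$, so the exponent $1/(\tau - d/2)$ is well defined and positive.

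Next I would apply Lemma \ref{lemma:estimate_for_fill_dist} to get $h_N \leq M \cdot \epsilon^{1/(\tau - d/2)}$, and then insert the explicit decay $\epsilon = \Vert P_N \Vert_{L^\infty(\Omega)} \leq C_P \cdot \gamma^{-2} \cdot N^{\frac{1}{2} - \frac{\tau}{d}}$ from \eqref{eq:upper_bound_power_function}. The only computation left is the bookkeeping of the exponents: since $\frac{1}{2} - \frac{\tau}{d} = -\frac{\tau - d/2}{d}$, raising $N^{\frac{1}{2} - \frac{\tau}{d}}$ to the power $1/(\tau - d/2)$ produces exactly $N^{-1/d}$, while $(\gamma^{-2})^{1/(\tau - d/2)} = \gamma^{-2/(\tau - d/2)}$ and $C_P^{1/(\tau - d/2)}$ is an $N$- and $\gamma$-independent constant. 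Collecting terms and setting $c := M \cdot C_P^{1/(\tau - d/2)}$ then yields the claimed bound $h_N \leq c \cdot \gamma^{-2/(\tau - d/2)} \cdot N^{-1/d}$.

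I do not expect a serious obstacle here: the theorem is essentially a corollary of the two preliminary results, and the only points requiring a little care are the simplification of the exponents and checking the compatibility of the geometric hypotheses of Theorem \ref{th:decay_power_func_restricted} and Lemma \ref{lemma:estimate_for_fill_dist} (which hold simultaneously under the stated assumptions). The substantive work has already been done in establishing the Power function decay in Theorem \ref{th:decay_power_func_restricted} and in sharpening the De Marchi and Schaback fill-distance estimate to the endpoint exponent $\tau$ in Lemma \ref{lemma:estimate_for_fill_dist}.
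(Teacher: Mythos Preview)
Your proposal is correct and follows exactly the approach indicated in the paper: the paper observes (in the paragraph preceding the theorem) that the hypothesis \eqref{eq:epsilon_estimate} of Lemma~\ref{lemma:estimate_for_fill_dist} is satisfied with $\epsilon = \Vert P_N \Vert_{L^\infty(\Omega)}$ via \eqref{eq:power_function_bound}, and then combines this with the Power function decay \eqref{eq:upper_bound_power_function} from Theorem~\ref{th:decay_power_func_restricted}. Your write-up simply spells out the exponent bookkeeping that the paper leaves implicit.
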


This upper bound has a decay of $N^{-1/d}$. Recalling Theorem \ref{th:bounds_fill_sep_dist} which gives a lower bound with the same decay of $N^{-1/d}$ shows 
that these decay rates are exact and cannot be improved further.

\subsection{Estimate on the separation distance}

Based on the lower bound of the decay of the Power function, even statements on the separation distance are possible. 

In the following we prove two different results. First, in Theorem \ref{th:lower_bound_power_func} we obtain an optimal rate of decay of the separation 
distance. Since the proof is mainly based on the application of the mean value theorem, further smoothness is needed, namely $\tau > d/2 + 1$. 

To circumvent this limitation we show another proof strategy in Theorem \ref{th:estimate_dist_Omega}, which gives, however, rates which are optimal only under 
certain 
conditions, which are discussed at the end of this section.

First of all, the following simple Lemma \ref{lem:dist_sep_estimate} is stated. Since the proof easily follows by induction on $N$, we omit it here.
\begin{lemma} \label{lem:dist_sep_estimate}
If there exist constants $c, \alpha > 0$, such that for all $N \in \mathbb{N}$ it holds $\mathrm{dist}(\Omega_\gamma^{(N)},X_N) \geq c \cdot N^{-\alpha}$, then it 
follows $q_{N+1} \geq c \cdot N^{-\alpha} ~ \forall N \in \mathbb{N}$.
\end{lemma}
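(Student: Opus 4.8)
The plan is to proceed by induction on $N$, exploiting the nested structure $X_{N+1} = X_N \cup \{x_{N+1}\}$ of the point sets produced by any $\gamma$-stabilized algorithm. The key observation is that, since exactly one point is appended at each step, the separation distance obeys the recursion
\[
q_{N+1} = \min\left\{ q_N,\ \min_{x_i \in X_N} \Vert x_{N+1} - x_i \Vert_2 \right\},
\]
and, because the newly selected point satisfies $x_{N+1} \in \Omega_\gamma^{(N)}$, we have $\min_{x_i \in X_N} \Vert x_{N+1} - x_i \Vert_2 \geq \mathrm{dist}(\Omega_\gamma^{(N)}, X_N) \geq c \cdot N^{-\alpha}$ by assumption. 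In particular $x_{N+1} \notin X_N$ (as one also sees directly from $P_{X_N}$ vanishing on $X_N$), so $q_{N+1}$ is well defined and positive.

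For the base case $N=1$ the set $X_2 = \{x_1, x_2\}$ consists of two points, so $q_2 = \Vert x_1 - x_2 \Vert_2 \geq \mathrm{dist}(\Omega_\gamma^{(1)}, X_1) \geq c = c \cdot 1^{-\alpha}$. For the induction step, assume $q_{N+1} \geq c \cdot N^{-\alpha}$. Applying the recursion above with index $N+1$ and invoking the hypothesis of the lemma with $N+1$ in place of $N$ gives $\min_{x_i \in X_{N+1}} \Vert x_{N+2} - x_i \Vert_2 \geq c \cdot (N+1)^{-\alpha}$, while $q_{N+1} \geq c \cdot N^{-\alpha} \geq c \cdot (N+1)^{-\alpha}$ since $t \mapsto t^{-\alpha}$ is decreasing for $\alpha > 0$. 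Taking the minimum of the two lower bounds yields $q_{N+2} \geq c \cdot (N+1)^{-\alpha}$, which closes the induction.

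There is essentially no hard step here; the entire argument is bookkeeping of indices. The only points worth verifying carefully are (i) that the recursion for $q_{N+1}$ is exactly $\min\{q_N, \mathrm{dist}(x_{N+1}, X_N)\}$, which holds precisely because a single point is added at each iteration; (ii) the elementary monotonicity $N^{-\alpha} \geq (N+1)^{-\alpha}$ used to absorb $q_{N+1}$ into the new bound; and (iii) that $x_{N+1}$ is genuinely distinct from all previously selected points, which is guaranteed by $\mathrm{dist}(\Omega_\gamma^{(N)}, X_N) > 0$. This is why the statement is asserted to follow easily by induction, and its proof is omitted.
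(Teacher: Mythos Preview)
Your proof is correct and follows exactly the approach indicated by the paper, which states that the result ``easily follows by induction on $N$'' and omits the details. You have simply written out this induction explicitly, using the recursion $q_{N+1} = \min\{q_N,\ \mathrm{dist}(x_{N+1}, X_N)\}$ together with $x_{N+1} \in \Omega_\gamma^{(N)}$ and the monotonicity of $t \mapsto t^{-\alpha}$; there is nothing to add.
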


To prove the first main result Theorem \ref{th:lower_bound_sep_dist2}, an estimate on the derivative of the residual is needed. This is achieved by using 
Theorem \ref{th:estimate_derivatives_wendland}  with $X = X_N$ for $u \in W_p^{\tau}(\Omega)$ satisfying $u|_{X_N} = 0$ and setting, for $\tau > d/2$, the 
values $m=1, q=\infty, s=0, k=\tau, p=2$. Thus the following lemma holds.

\begin{lemma}
\label{lemma:estimate_derivatives_wendland}
Suppose $\Omega \subset \mathbb{R}^d$ is a bounded domain satisfying an interior cone condition and having a Lipschitz boundary. Let $X_N \subset \Omega$ be a 
discrete set with sufficiently small fill distance $h = h_{X_N,\Omega}$. Let $\tau>1+d/2$. Then for each $u \in W_2^\tau(\Omega)$ with $u|_{X_N} = 0$ we have 
that
\begin{align}
\label{eq:estimate_derivatives_wendland}
|u|_{W_\infty^1(\Omega)} \equiv \sup_{|\alpha| = 1} \Vert D^\alpha u \Vert_\infty \leq C h^{\tau-1-d/2} \cdot |u|_{W_2^{\tau}(\Omega)},
\end{align}
where $C>0$ is a constant independent of $u$ and $h$.
\end{lemma}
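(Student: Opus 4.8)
The plan is to derive the claimed inequality \eqref{eq:estimate_derivatives_wendland} directly as a specialization of the general sampling inequality of Theorem~\ref{th:estimate_derivatives_wendland}, exactly as announced in the paragraph preceding the lemma. Concretely, I would invoke that theorem with the discrete set $X := X_N$ and the parameter choice $m = 1$, $q = \infty$, $p = 2$, splitting the smoothness index as $\tau = k + s$ with $k := \tau$, $s := 0$ if $\tau \in \mathbb{N}$, and $k := \lfloor \tau \rfloor$, $s := \tau - \lfloor \tau \rfloor \in (0,1)$ otherwise; in either case $k \in \mathbb{N}$ and $0 \le s < 1$, as the theorem requires.

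Next I would check that the hypotheses of Theorem~\ref{th:estimate_derivatives_wendland} are satisfied. The assumptions on $\Omega$ (bounded, interior cone condition, Lipschitz boundary) and on $X_N$ (sufficiently small fill distance $h = h_{X_N,\Omega}$) are precisely those carried over into the statement of the present lemma, and $u \in W_2^\tau(\Omega)$ is assumed. The remaining index constraint, which for $p = 2 > 1$ reads $k > m + d/p = 1 + d/2$, is matched by the standing smoothness hypothesis $\tau > 1 + d/2$ together with the above choice of $k$. Theorem~\ref{th:estimate_derivatives_wendland} then yields
\begin{align*}
|u|_{W_\infty^1(\Omega)} \le C\left( h^{\,\tau - 1 - d(1/2 - 1/\infty)_+} \cdot |u|_{W_2^{\tau}(\Omega)} + h^{-1} \Vert u|_{X_N} \Vert_\infty \right),
\end{align*}
with $C > 0$ independent of $u$ and $h$.

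Finally I would simplify the two terms. Since $1/\infty = 0$ and $(1/2)_+ = 1/2$, the exponent in the first summand equals $\tau - 1 - d/2$; and since by assumption $u|_{X_N} = 0$, the second summand vanishes, and with it the negative power of $h$. Recalling that $|u|_{W_\infty^1(\Omega)} = \sup_{|\alpha|=1}\Vert D^\alpha u\Vert_{L^\infty(\Omega)}$ by the notation fixed earlier, this is exactly \eqref{eq:estimate_derivatives_wendland}, with the same constant $C$. There is essentially no substantive obstacle here: the only point demanding a little care is the bookkeeping of the admissible indices — in particular decomposing $\tau$ into integer and fractional parts so that the condition ``$k > m + d/p$'' of Theorem~\ref{th:estimate_derivatives_wendland} is covered by the hypothesis $\tau > 1 + d/2$ — after which the statement follows by plain substitution.
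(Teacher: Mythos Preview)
Your proposal is correct and follows exactly the approach the paper takes: the paper derives the lemma by invoking Theorem~\ref{th:estimate_derivatives_wendland} with $X = X_N$, $m=1$, $q=\infty$, $p=2$, and $s=0$, $k=\tau$, so that the second summand vanishes because $u|_{X_N}=0$. Your version is in fact slightly more careful in decomposing $\tau = k + s$ for non-integer $\tau$, whereas the paper tacitly writes $k=\tau$, $s=0$.
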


Moreover, in the proof of Theorem \ref{th:lower_bound_sep_dist2} we need a bound on the derivative of the Power function. This will turn out to involve a 
generalized Power function, which can be used to bound the error in the interpolation of derivatives of $f$, exactly as the standard Power function bounds the 
error in the interpolation of pointwise values of $f$. We recall a construction of this bound in the following theorem. Observe that the condition on the 
functionals is 
in particular realized if $k$ is a translational invariant kernel with smoothness $\tau > d/2+1$ (see e.g. Chapter 16 in \cite{Wendland2005}). 

\begin{lemma}
\label{lemma:power_func_derivative}
Suppose $\Omega\subset\R^d$ and let $k$ be a strictly positive definite kernel. Let $\delta_x\circ \partial_i:\ns\to\R$ be the linear functional given by the 
evaluation 
in $x\in\Omega$ of the the derivative in the $i$-th coordinate direction. 

If $\delta_x\circ \partial_i\in\ns'$ for all $x\in\Omega$ and for all $i=1, .., d$, then it holds that
\begin{equation}
\begin{aligned}
\label{eq:generalized_power_function}
P_N(\delta_x\circ\partial_i) :=& \left \Vert \partial_i^1 k(x, \cdot) - \sum_{j=1}^N \partial_i^1 k(x,x_j) l_j \right \Vert_{\ns} \\
=& \sup_{0 \neq f \in \ns} \frac{| \partial_i f(x) - \partial_i \Pi_{V(X_N)}f(x)|}{\Vert f \Vert_{\ns}},
\end{aligned}
\end{equation}
where $\{l_j\}_{j=1}^N$ denotes the Lagrange basis of Proposition \ref{prop:lagrange_interpolant}.
\end{lemma}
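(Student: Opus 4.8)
The plan is to mimic the classical construction of the Power function as the norm of the error functional, but applied to the composed functional $\delta_x \circ \partial_i$ instead of the plain evaluation functional $\delta_x$. First I would recall that, by the reproducing property, for any $f \in \ns$ and any fixed $x \in \Omega$ one has $\partial_i f(x) = (f, \partial_i^1 k(x,\cdot))_{\ns}$, where $\partial_i^1$ denotes differentiation with respect to the first argument of $k$; this is exactly the statement that the Riesz representer of $\delta_x \circ \partial_i$ is $\partial_i^1 k(x,\cdot) \in \ns$, which is well-defined precisely under the assumption $\delta_x\circ\partial_i \in \ns'$. Since $\Pi_{V(X_N)}f = \sum_{j=1}^N f(x_j) l_j$ by Proposition \ref{prop:lagrange_interpolant}, differentiating gives $\partial_i \Pi_{V(X_N)}f(x) = \sum_{j=1}^N f(x_j)\, \partial_i l_j(x)$, and since $f(x_j) = (f, k(\cdot,x_j))_{\ns}$ this equals $(f, \sum_{j=1}^N \partial_i l_j(x)\, k(\cdot,x_j))_{\ns}$.

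The key step is then to identify the inner-product representer of the error functional $f \mapsto \partial_i f(x) - \partial_i \Pi_{V(X_N)}f(x)$. From the two identities above, this representer is $g_x := \partial_i^1 k(x,\cdot) - \sum_{j=1}^N \partial_i l_j(x)\, k(\cdot, x_j)$. To match the form claimed in \eqref{eq:generalized_power_function}, I would show that $\sum_{j=1}^N \partial_i l_j(x)\, k(\cdot,x_j) = \sum_{j=1}^N \partial_i^1 k(x,x_j)\, l_j(\cdot)$; both are the orthogonal projection $\Pi_{V(X_N)}(\partial_i^1 k(x,\cdot))$ onto $V(X_N)$, which can be verified either by noting that the Lagrange functions satisfy $l_j = \sum_m (A^{-1})_{jm} k(\cdot,x_m)$ and computing both sides against this, or more cleanly by checking that $\partial_i^1 k(x,\cdot)$ and $\sum_j \partial_i^1 k(x,x_j) l_j$ agree on the nodes $X_N$ (using $l_j(x_m) = \delta_{jm}$) hence differ by an element orthogonal to $V(X_N)$, and that $\sum_j \partial_i^1 k(x,x_j) l_j \in V(X_N)$. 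Either way, $g_x = \partial_i^1 k(x,\cdot) - \Pi_{V(X_N)}(\partial_i^1 k(x,\cdot))$, which is exactly the quantity inside the first norm in \eqref{eq:generalized_power_function}.

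Finally, the supremum characterization is the standard Cauchy--Schwarz / Riesz argument: for a bounded linear functional $L$ on a Hilbert space with representer $g$, one has $\|g\|_{\ns} = \sup_{0 \neq f} |L(f)|/\|f\|_{\ns}$, with the supremum attained at $f = g$. Applying this with $L(f) = \partial_i f(x) - \partial_i \Pi_{V(X_N)}f(x)$ and representer $g = g_x$ gives the second equality in \eqref{eq:generalized_power_function}, completing the proof. I do not expect any serious obstacle here; the only point requiring a little care is the justification that $\partial_i^1 k(x,\cdot)$ genuinely lies in $\ns$ and that differentiation commutes with the (finite) sum defining the interpolant — both of which are guaranteed by the hypothesis $\delta_x \circ \partial_i \in \ns'$, together with the remark (recalled just before the lemma) that this holds automatically for translational invariant kernels of smoothness $\tau > d/2 + 1$.
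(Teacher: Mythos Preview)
Your proposal is correct and follows essentially the same Riesz-representer / Cauchy--Schwarz argument as the paper. The only difference is that the paper avoids your intermediate identity $\sum_j \partial_i l_j(x)\,k(\cdot,x_j) = \sum_j \partial_i^1 k(x,x_j)\,l_j$ by using self-adjointness of the orthogonal projection directly, writing $\lambda(f)-\lambda(\Pi_{V(X_N)}f) = (v_\lambda,(\id-\Pi_{V(X_N)})f)_{\ns} = ((\id-\Pi_{V(X_N)})v_\lambda,f)_{\ns}$ and then expressing $\Pi_{V(X_N)}v_\lambda = \sum_j v_\lambda(x_j)l_j$ immediately in the Lagrange basis.
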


\begin{proof} %
We have that the composition of the point evaluation with the 
partial derivative is a continuous functional, and the corresponding Riesz representer is $v_\lambda = \partial_i^1 k(x, \cdot)$, i.e., it holds $\lambda(f) = 
(\delta_x \circ \partial_i)(f) = (f, v_\lambda)_{\ns} = (\partial_i f)(x)$. Now the proof is quite similar to the proof which shows the equivalence of the two 
expressions in \eqref{eq:generalized_power_function} above for the ordinary Power function, i.e., without the partial derivative.

Recall from Proposition \ref{prop:lagrange_interpolant} that the interpolant of a function $g$ can be expressed in terms of the Lagrange basis as 
$\Pi_{V(X_n)}g(\cdot) = 
\sum_{j=1}^N 
g(x_j) l_j(\cdot)$. Set $g = v_\lambda$ and calculate
\begin{align*}
\lambda(f) - \lambda(\Pi_{V(X_N)}(f)) &= (v_\lambda, (\id-\Pi_{V(X_N)})(f) )_{\ns} \\
&= ((\id-\Pi_{V(X_N)})(v_\lambda), f)_{\ns} \\
&= (v_\lambda - \sum_{j=1}^N v_\lambda(x_j)l_j(\cdot),f)_{\ns}.
\end{align*}
Thus it holds
\begin{align*}
\sup_{0 \neq f \in \ns} \frac{|\partial_i f(x) - \partial_i \Pi_{V(X_N)}f(x)|}{\Vert f \Vert_{\ns}} 
&= \sup_{0 \neq f \in \ns} \frac{|\lambda(f) - \lambda(\Pi_{V(X_N)}(f))|}{\Vert f \Vert_{\ns}} \\
&\hspace{-10mm} = \sup_{0 \neq f \in \ns} \frac{|(v_\lambda - \sum_{j=1}^N v_\lambda(x_j)l_j(\cdot),f)_{\ns}|}{\Vert f 
\Vert_{\ns}} \\
&\hspace{-10mm} \leq \sup_{0 \neq f \in \ns} \frac{\Vert v_\lambda - \sum_{j=1}^N v_\lambda(x_j) l_j(\cdot) \Vert_{\ns} \cdot \Vert f 
\Vert_{\ns}}{\Vert f \Vert_{\ns}} \\
&\hspace{-10mm} = \left \Vert v_\lambda - \sum_{j=1}^N v_\lambda(x_j) l_j(\cdot) \right \Vert_{\ns} \\
&\hspace{-10mm} = \left \Vert \partial_i k(x, \cdot) - \sum_{j=1}^N \partial_i^1 k(x,x_j) l_j(\cdot) \right \Vert_{\ns}.
\end{align*}
Equality is attained by choosing $f = v_\lambda - \sum_{j=1}^N v_\lambda(x_j) l_j(\cdot)$, hence both expressions are equal and the proof is finished.
\end{proof}

Now the first main theorem concerning the separation distance can be formulated. The proof follows the lines of the ones of Lemma 4.2 and Theorem 4.3
 of \cite{DeMarchi2005}, although we include a refined estimate of the derivative of the Power function. Moreover, while the original theorem was used to 
obtain 
a decay of the Power function, we use exactly this decay here to derive a bound on the separation distance.

\begin{theorem} \label{th:lower_bound_sep_dist2}
Let $\Omega \subset \mathbb{R}^d$ be a compact domain satisfying an interior cone condition and having a Lipschitz boundary. Suppose that $k$ is a kernel of 
finite 
smoothness $\tau$ such that $\tau > d/2 + 1$.

Then any $\gamma$-stabilized algorithm yields a sequence of set of points such that
\begin{align*}
q_{N+1} \geq C \cdot \gamma^{4-\frac{2}{\tau-d/2}} \cdot N^{-1/d} ~~ \forall N \in \mathbb{N}.
\end{align*}
\end{theorem}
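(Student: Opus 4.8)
The plan is to estimate the distance between the restricted set $\Omega_\gamma^{(N)}$ of admissible points and the already selected set $X_N$, and then to invoke Lemma~\ref{lem:dist_sep_estimate}. Concretely, it suffices to prove that
\begin{align*}
\mathrm{dist}(\Omega_\gamma^{(N)},X_N) \ \geq\ C \cdot \gamma^{4-\frac{2}{\tau-d/2}} \cdot N^{-1/d}
\end{align*}
for a suitable $C>0$ and all large $N$, since then Lemma~\ref{lem:dist_sep_estimate} (applied with $\alpha=1/d$) yields the claimed bound on $q_{N+1}$, after adjusting the constant to absorb the finitely many small values of $N$. Thus the whole argument reduces to showing that any admissible point $x\in\Omega_\gamma^{(N)}$ stays far from every $x_j\in X_N$, and for this the decisive observation is that $P_N$ vanishes at each $x_j$ while possessing a controlled gradient.

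First I would record a product-type bound for the gradient of the squared Power function. Since $\tau>d/2+1$, the functionals $\delta_x\circ\partial_i$ lie in $\ns'$ (see the remark preceding Lemma~\ref{lemma:power_func_derivative}), the map $x\mapsto k(\cdot,x)$ is $\ns$-differentiable, and $P_N^2$ is $C^1$ on $\Omega$ with
\begin{align*}
\tfrac{1}{2}\,\partial_{x_i} P_N(x)^2 = \Big((\id-\Pi_{V(X_N)})\big[\partial_i^1 k(x,\cdot)\big],\ (\id-\Pi_{V(X_N)})\big[k(\cdot,x)\big]\Big)_{\ns},
\end{align*}
so that Cauchy--Schwarz together with \eqref{eq:power_function_as_norm} and \eqref{eq:generalized_power_function} yields the pointwise estimate $|\nabla_x P_N(x)^2| \leq C\cdot P_N(x)\cdot\max_{1\le i\le d} P_N(\delta_x\circ\partial_i)$. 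Next I would bound this derivative Power function uniformly on $\Omega$: by Lemma~\ref{lemma:power_func_derivative}, each $P_N(\delta_x\circ\partial_i)$ equals the supremum over $0\ne f\in\ns$ of $|\partial_i(f-\Pi_{V(X_N)}f)(x)|/\Vert f\Vert_{\ns}$; applying Lemma~\ref{lemma:estimate_derivatives_wendland} to $u:=f-\Pi_{V(X_N)}f$, which vanishes on $X_N$, together with the norm equivalence between $W_2^\tau(\Omega)$ and $\ns$ and with $\Vert u\Vert_{\ns}\le\Vert f\Vert_{\ns}$, gives
\begin{align*}
\sup_{x\in\Omega}\,\max_{1\le i\le d} P_N(\delta_x\circ\partial_i) \ \leq\ C\cdot h_N^{\tau-1-d/2}
\end{align*}
for all sufficiently small $h_N$, i.e.\ for all large $N$. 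This $h_N$-dependent control of the derivative of the Power function is the refinement over the original argument in \cite{DeMarchi2005} announced before the theorem.

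Now fix $x\in\Omega_\gamma^{(N)}$ and $x_j\in X_N$. Since $\Omega$ has a Lipschitz boundary it is quasiconvex, so $x$ and $x_j$ can be joined inside $\Omega$ by a rectifiable path of length at most $c_\Omega\Vert x-x_j\Vert$; integrating $\nabla P_N^2$ along this path and using $P_N(x_j)=0$, the two bounds above, and the crude estimate $P_N\le\Vert P_N\Vert_{L^\infty(\Omega)}$ on $\Omega$, one obtains
\begin{align*}
\gamma^2\,\Vert P_N\Vert_{L^\infty(\Omega)}^2 \ \leq\ P_N(x)^2 \ =\ P_N(x)^2-P_N(x_j)^2 \ \leq\ C\cdot h_N^{\tau-1-d/2}\cdot\Vert P_N\Vert_{L^\infty(\Omega)}\cdot\Vert x-x_j\Vert,
\end{align*}
where the first inequality is the defining property of $\Omega_\gamma^{(N)}$. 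Dividing and then inserting the lower bound $\Vert P_N\Vert_{L^\infty(\Omega)}\geq c_P\cdot N^{1/2-\tau/d}$ from Theorem~\ref{th:lower_bound_power_func} and the upper bound $h_N\leq c\cdot\gamma^{-2/(\tau-d/2)}\cdot N^{-1/d}$ from Theorem~\ref{th:estimate_fill_dist} (raised to the positive power $\tau-1-d/2$), and collecting exponents, namely $2+\tfrac{2(\tau-1-d/2)}{\tau-d/2}=4-\tfrac{2}{\tau-d/2}$ for $\gamma$ and $(\tfrac12-\tfrac{\tau}{d})+\tfrac{\tau-1-d/2}{d}=-\tfrac1d$ for $N$, produces the displayed lower bound on $\mathrm{dist}(\Omega_\gamma^{(N)},X_N)$; Lemma~\ref{lem:dist_sep_estimate} then finishes the proof.

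I expect the main obstacle to be the rigorous justification of the gradient identity for $P_N^2$, i.e.\ verifying that $x\mapsto k(\cdot,x)$ is Fréchet differentiable into $\ns$ with $\partial_i^1 k(x,\cdot)\in\ns$ — which is precisely where the stronger smoothness assumption $\tau>d/2+1$ enters — and in checking that Lemma~\ref{lemma:estimate_derivatives_wendland} may be applied with $X=X_N$, whose ``sufficiently small fill distance'' hypothesis is eventually satisfied because $h_N\to 0$, with the remaining small-$N$ cases absorbed into the constants. A minor technical point is the quasiconvexity of $\Omega$, needed to turn the mean value / path-integral estimate into one phrased in the Euclidean distance; this is standard for bounded Lipschitz domains and only introduces the constant $c_\Omega$.
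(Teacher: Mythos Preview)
Your proposal is correct and follows essentially the same approach as the paper: differentiate $P_N^2$, bound $|\partial_i P_N^2(x)|\le 2\,P_N(\delta_x\circ\partial_i)\,P_N(x)$ via Cauchy--Schwarz, control $P_N(\delta_x\circ\partial_i)$ by $C\,h_N^{\tau-1-d/2}$ through Lemmas~\ref{lemma:power_func_derivative} and~\ref{lemma:estimate_derivatives_wendland}, and then combine a mean-value-type estimate between $x\in\Omega_\gamma^{(N)}$ and $x_j\in X_N$ with the bounds of Theorems~\ref{th:lower_bound_power_func} and~\ref{th:estimate_fill_dist} before invoking Lemma~\ref{lem:dist_sep_estimate}.

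The one genuine difference is geometric: the paper applies the classical mean value theorem along the straight segment $[x,x_j]$, whereas you integrate $\nabla P_N^2$ along a rectifiable path inside $\Omega$ furnished by quasiconvexity of bounded Lipschitz domains. Your version is in fact the more careful one, since the paper tacitly assumes the segment remains in $\Omega$ (where the derivative estimate from Lemma~\ref{lemma:estimate_derivatives_wendland} is valid), an assumption not guaranteed for non-convex $\Omega$; your quasiconvexity argument closes this gap at the cost of an extra harmless constant $c_\Omega$.
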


\begin{proof}%
For structural reasons the proof is split up into two parts:
\begin{enumerate}[label=\roman*.]
\item The representation of the Power function in terms of the Lagrange basis (see Proposition \ref{prop:lagrange_interpolant}) yields a possibility to 
evaluate 
the 
partial 
derivative in the $i$-th direction, $i \in \{1, .., d \}$. Here $\partial_i^1$ denotes the partial derivative in the $i$-th direction with respect to the first 
argument:
\begin{align} 
\partial_i P_N^2(x) =& \partial_i \left( k(x, \cdot) - \sum_{j=1}^N k(x,x_j)l_j, k(x,\cdot) - \sum_{j=1}^N l_j k(x,x_j) \right)_{\ns} \nonumber \\
=& 2 \cdot \left( \partial_i^1 k(x, \cdot) - \sum_{j=1}^N \partial_i^1 k(x,x_j)l_j, k(x,\cdot) - \sum_{j=1}^N l_j k(x,x_j) \right)_{\ns} \nonumber \\
\Rightarrow ~ | \partial_i P_N^2(x) | \leq& 2 \cdot    \underbrace{ \left \Vert \partial_i^1 k(x, \cdot) - \sum_{j=1}^N \partial_i^1 k(x,x_j) l_j \right 
\Vert_{\ns} }_{=: P_N(\delta_x\circ\partial_i)}     \cdot P_N(x) \nonumber \\
=& 2 \cdot P_N(\delta_x\circ\partial_i) \cdot P_N(x). 
\label{eq:lower_bound_sep_dist2_1}
\end{align}
Lemma \ref{lemma:power_func_derivative} can be used together with Equation \eqref{eq:estimate_derivatives_wendland} to estimate $P_N(\delta_x\circ\partial_i)$:
\begin{align*}
P_N(\delta_x\circ\partial_i) &= \sup_{0 \neq f \in \ns} \frac{| \partial_i f(x) - \partial_i \Pi_{V(X_N)}f(x) |}{\Vert f 
\Vert_{\ns}} \\
&\leq \sup_{0 \neq f \in \ns} \frac{\sup_{|\alpha|=1} \Vert D^\alpha (f-\Pi_{V(X_N)}f) \Vert_\infty}{\Vert f \Vert_{\ns}} \\
&\stackrel{\eqref{eq:estimate_derivatives_wendland}}{\leq} \sup_{0 \neq f \in \ns} \frac{C \cdot h_N^{\tau-d/2-1} \cdot 
|f-\Pi_{V(X_N)}f|_{W_2^{\tau}(\Omega)}}{\Vert 
f \Vert_{\ns}}.
\end{align*}
The application of Equation \eqref{eq:estimate_derivatives_wendland} was possible since due to Theorem \ref{th:estimate_fill_dist} the fill distance $h_N$ will 
be sufficiently small for $N$ sufficiently large. Now the semi-norm $| \cdot |_{W_2^{\tau}}$ can be estimated from above by the full norm $\Vert \cdot 
\Vert_{W_2^{\tau}}$ and then the norm equivalence $\Vert \cdot \Vert_{W_2^{\tau}(\Omega)} \asymp \Vert \cdot \Vert_{\ns}$ can be used as well as $\Vert \mathrm{Id} - \Pi_{V(X_N)} \Vert_{\mathcal{L}({\ns})} \leq 1$ due to orthogonality of the projection:
\begin{align*}
P_N(\delta_x\circ\partial_i) &\leq C \cdot h_N^{\tau-d/2-1} \cdot \sup_{0 \neq f \in \ns} \frac{\Vert f-\Pi_{V(X_N)} f 
\Vert_{W_2^{\tau}(\Omega)}}{\Vert 
f 
\Vert_{\ns}} \\
&\leq C' \cdot h_N^{\tau-d/2-1} \cdot \sup_{0 \neq f \in \ns} \frac{\Vert f-\Pi_{V(X_N)} f \Vert_{\ns}}{\Vert f 
\Vert_{\ns}} \\
&\leq C' \cdot h_N^{\tau-d/2-1}.
\end{align*}
This estimate for $P_N(\delta_x\circ\partial_i)$ can be plugged into Inequality \eqref{eq:lower_bound_sep_dist2_1} to obtain
\begin{align}
| \partial_i P_N^2(x) | \leq 2C' \cdot h_N^{\tau-d/2-1} \cdot P_N(x).
\label{eq:lower_bound_sep_dist2_2}
\end{align}
\item Now the mean value theorem can be used to estimate $\text{dist}(\Omega_\gamma^{(N)}, X_N)$. Thus, we consider $\tilde{x} \in \Omega_\gamma^{(N)}$ and 
apply the mean value theorem to $P_N^2$ on the line segment between $\tilde{x}$ and $x_j \in X_N$. This gives a point $\eta = (1-t)\tilde{x} + tx_j$ with $t 
\in [0,1]$. Since it holds $P_N^2(x_j) = 0$, we have
\begin{align}
\label{eq:application_meanvaluetheorem}
\gamma^2 \cdot \Vert P_N \Vert_\infty^2 \leq& P_N^2(\tilde{x}) = (\nabla P_N^2)(\eta) \cdot (\tilde{x}-x_j) \nonumber \\
\leq& \Vert (\nabla P_N^2)(\eta)\Vert_2 \cdot \Vert \tilde{x}-x_j \Vert_2. 
\end{align}
The first factor on the right hand side can be estimated further with help of Equation \eqref{eq:lower_bound_sep_dist2_2} from above:
\begin{align*}
\Vert (\nabla P_N^2)(\eta) \Vert_2 =& \left( \sum_{i=1}^d ((\partial_i P_N^2)(\eta))^2 \right)^{1/2} \\ 
\stackrel{\eqref{eq:lower_bound_sep_dist2_2}}{\leq}& \left( \sum_{i=1}^d (C' \cdot h_N^{\tau-d/2-1} \cdot P_N(\eta) )^2 \right)^{1/2} \\
\leq& C' \cdot h_N^{\tau-d/2-1} \cdot d \cdot \Vert P_N \Vert_{L^{\infty}}.
\end{align*}
Plugging that estimate for $\Vert (\nabla P_N^2)(\eta) \Vert_{2}$ into the Estimate \eqref{eq:application_meanvaluetheorem} yields
\begin{align*}
\gamma^2 \cdot \Vert P_N \Vert_\infty^2 \leq& C' \cdot h_{N}^{\tau-d/2-1} \cdot d \cdot \Vert P_N \Vert_\infty \cdot \Vert \tilde{x}-x_j \Vert_{2} \\
\Leftrightarrow ~~~ \Vert \tilde{x}-x_j \Vert_{2} \geq&
\frac{\gamma^2}{C' \cdot d} \cdot \frac{\Vert P_N \Vert_\infty}{h_N^{\tau-d/2-1}}.
\end{align*}
Now, the upper estimate on the fill distance from Theorem \ref{th:estimate_fill_dist}, namely $h_N \leq c_2 \gamma^{-\frac{2}{\tau-d/2}} N^{-1/d}$ and the 
lower 
estimate 
on the Power function from Theorem \ref{th:lower_bound_power_func}, namely $\Vert P_N \Vert_\infty \geq c_P \cdot N^{1/2-\tau/d}$ can be applied to obtain:
\begin{align*}
\Vert \tilde{x}-x_j \Vert_2 \geq \underbrace{\frac{c_p}{C' \cdot d \cdot c_2^{\tau-d/2-1}}}_{=: \tilde{C}} \cdot 
\frac{\gamma^2}{\gamma^{-\frac{2(\tau-d/2-1)}{\tau-d/2}}} 
\cdot \frac{N^{1/2-\tau/d}}{N^{1/2-\tau/d+1/d}}.
\end{align*}
A simplification of the occurring exponents finally leads to
\begin{align*}
\Vert \tilde{x} - x_j \Vert_2 \geq \tilde{C} \cdot \gamma^{4-\frac{2}{\tau - d/2}} \cdot N^{-1/d}.
\end{align*}
\end{enumerate}
Since $\tilde{x} \in \Omega_\gamma^{(N)}$ was arbitrary, it holds $\text{dist}(\Omega_\gamma^{(N)},X_N) \geq \tilde{C} \cdot \gamma^{4-\frac{2}{\tau - d/2}} 
\cdot 
N^{-1/d}$ and Lemma \ref{lem:dist_sep_estimate} can be applied which directly yields $q_{N+1} \geq \tilde{C} \cdot \gamma^{4-\frac{2}{\tau - d/2}} \cdot 
N^{-1/d}$. \\
This argumentation holds for $N$ sufficiently large. For small $N$ we can simply adjust the constant $\tilde{C}$. Thus taking a proper constant $C$ finishes 
the proof.
\end{proof}
Observe that since $\tau > d/2 + 1$ it holds $2 < 4-2/(\tau-d/2) < 4$, i.e., the exponent of $\gamma$ is positive. \\
Altogether Theorem \ref{th:lower_bound_sep_dist2} states a lower bound on the separation distance which decays as $N^{-1/d}$. Remind that Theorem 
\ref{th:estimate_fill_dist} gives an upper bound on the fill distance which decays also as $N^{-1/d}$. Thus this shows that these decay rates are exact and 
cannot be 
improved further.

\subsubsection{A weaker result under weaker conditions}
Theorem \ref{th:lower_bound_sep_dist2} needed a further restriction on the smoothness of the kernel, namely $\tau > d/2 + 1$. To circumvent this limitation 
another way to 
estimate the distance is proposed in the following, which does not need this assumption.

\begin{theorem} \label{th:estimate_dist_Omega}
Consider a continuous radial basis function kernel $k(x,y) = \Phi(\Vert x-y \Vert)$ with $\Phi(0)=1$ and $\Phi(r)$ monotonically decreasing for $0 \leq r \le
r_0$ for a given $r_0 > 0$. Furthermore $\Phi$ is assumed to satisfy an estimate like $1-\Phi(r) \leq c \cdot r^b$ with $c > 0, b \in \mathbb{N}$ for $0 \leq r 
\leq 
r_0$. 

If any $\gamma$-stabilized greedy algorithm is applied to  a function on a compact set $\Omega \subset \mathbb{R}^d$ which satisfies an interior cone condition 
and has a Lipschitz boundary, then the following asymptotic estimate holds with $C>0$:
\begin{align*}
q_{N+1} \geq C \cdot \gamma^{2/b} \cdot \left( N^{\frac{1}{2}-\frac{\tau}{d}} \right)^{2/b}.
\end{align*}
\end{theorem}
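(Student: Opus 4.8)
The plan is to follow the strategy of Lemma~4.2 and Theorem~4.3 of \cite{DeMarchi2005}, as in the proof of Theorem~\ref{th:lower_bound_sep_dist2}, but to replace the mean value theorem estimate of the gradient of $P_N^2$ (which forced $\tau>d/2+1$) by a direct pointwise bound on $P_N$ coming from the local behaviour of $\Phi$ near the origin. As there, I would first bound $\mathrm{dist}(\Omega_\gamma^{(N)},X_N)$ from below and then invoke Lemma~\ref{lem:dist_sep_estimate} to pass to $q_{N+1}$. Throughout, $\tau$ denotes, as usual, the smoothness of the kernel, so that Theorem~\ref{th:lower_bound_power_func} is available.

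For the pointwise bound, note that for any $x_j\in X_N$ and any $x\in\Omega$, monotonicity of the Power function under enlargement of the set of centers gives $P_N(x)^2\le P_{\{x_j\}}(x)^2 = k(x,x)-k(x,x_j)^2/k(x_j,x_j) = 1-\Phi(\|x-x_j\|)^2$, using $k(x,x)=k(x_j,x_j)=\Phi(0)=1$. If $\|x-x_j\|\le r_0$ then $\Phi(\|x-x_j\|)\le\Phi(0)=1$, hence $1-\Phi(\|x-x_j\|)^2\le 2\bigl(1-\Phi(\|x-x_j\|)\bigr)\le 2c\,\|x-x_j\|^b$ by hypothesis, so $P_N(x)^2\le 2c\,\|x-x_j\|^b$ whenever $\|x-x_j\|\le r_0$. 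Now take $\tilde x\in\Omega_\gamma^{(N)}$ arbitrary and let $x_j\in X_N$ be a nearest center; then $\|\tilde x-x_j\|\le h_N$, and since $h_N\le r_0$ for $N$ large by Theorem~\ref{th:estimate_fill_dist}, the bound above applies. Combining it with the defining inequality $P_N(\tilde x)\ge\gamma\|P_N\|_{L^\infty(\Omega)}$ of $\Omega_\gamma^{(N)}$ yields
\begin{align*}
\gamma^2\,\|P_N\|_{L^\infty(\Omega)}^2 \le P_N(\tilde x)^2 \le 2c\,\|\tilde x-x_j\|^b ,
\end{align*}
so that $\|\tilde x-x_j\|\ge (\gamma^2/(2c))^{1/b}\,\|P_N\|_{L^\infty(\Omega)}^{2/b}$. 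Inserting the lower bound $\|P_N\|_{L^\infty(\Omega)}\ge c_P\,N^{1/2-\tau/d}$ of Theorem~\ref{th:lower_bound_power_func} and simplifying the exponents via $(1-2\tau/d)/b = (2/b)(1/2-\tau/d)$ gives $\|\tilde x-x_j\|\ge\tilde C\,\gamma^{2/b}\,(N^{1/2-\tau/d})^{2/b}$ with $\tilde C>0$ independent of $N$ and $\gamma$. Since $\tilde x\in\Omega_\gamma^{(N)}$ was arbitrary this bounds $\mathrm{dist}(\Omega_\gamma^{(N)},X_N)$ from below; absorbing the finitely many small-$N$ cases into the constant and applying Lemma~\ref{lem:dist_sep_estimate} finishes the proof.

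The main obstacle --- indeed essentially the only subtlety --- is that the hypothesis $1-\Phi(r)\le cr^b$ is only local, valid for $r\le r_0$, so the one-point Power function estimate is available only once the nearest center of $\tilde x$ lies within $r_0$; this is precisely why $h_N\to 0$ (Theorem~\ref{th:estimate_fill_dist}) is needed and why the statement is asymptotic, with small $N$ absorbed into the constant. One should also observe that, since in general $b\neq 2\tau-d$, the resulting decay $N^{(1-2\tau/d)/b}$ coincides with the optimal rate $N^{-1/d}$ only when $b=2\tau-d$; this is the restriction alluded to at the end of the statement.
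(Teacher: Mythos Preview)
Your proposal is correct and follows essentially the same approach as the paper's own proof: both bound $P_N(x)^2$ via the one-center Power function $P_{\{x_j\}}(x)^2 = 1-\Phi(\|x-x_j\|)^2$, estimate $1-\Phi^2\le 2(1-\Phi)\le 2c\,r^b$ using monotonicity and the local hypothesis, invoke Theorem~\ref{th:estimate_fill_dist} to ensure $h_N<r_0$ for large $N$, combine with the lower bound of Theorem~\ref{th:lower_bound_power_func} and the defining property of $\Omega_\gamma^{(N)}$, and conclude via Lemma~\ref{lem:dist_sep_estimate}. Your closing remark on optimality when $b=2\tau-d$ also matches the paper's subsequent discussion.
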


\begin{proof}
The representation of the Power function from Equation \eqref{eq:power_function_as_norm} will be used. Projecting to a smaller subspace yields a worse 
approximation, and in particular using $V(x_j) := \text{span}\{k(\cdot,x_j)\} \subset \text{span}\{k(\cdot, x_i) | x_i \in X_N \} \equiv V(X_N)$ gives the 
following for all $j \in \{1, .., N\}$:
\begin{align*}
P_N(x)^2 \equiv& \Vert k(\cdot, x) - \Pi_{V_N}(k(\cdot, x)) \Vert_{\ns}^2 \\
\leq& \min_{i=1,..,N} \Vert k(\cdot, x) - \Pi_{V(x_i)}(k(\cdot, x)) \Vert_{\ns}^2 \\
=& \min_{i=1,..,N} \Vert k(\cdot, x) - k(x_i,x) \cdot k(\cdot, x_i) \Vert_{\ns}^2
\end{align*}
where $\Vert k(\cdot, x_i)\Vert_{\ns} = k(x_i,x_i)^{1/2} = \Phi(0)^{1/2} = 1$ was used in the last step.
The norm can be expressed via the dot product, then the reproducing property of the kernel can be used. Thus it holds
\begin{align*}
P_N(x)^2 \leq& \min_{i=1,..,N} \Vert k(\cdot, x) - k(x_i,x) \cdot k(\cdot, x_i) \Vert_{\ns}^2 \\
=& \min_{i=1,..,N} k(x,x) - 2 \cdot k(x_i,x) \cdot k(x,x_i) + k(x_i,x)^2 \cdot \underbrace{k(x_i, x_i)}_{= \Phi(0) = 1} \\
=& \min_{i=1,..,N} 1-k(x_i,x)^2 \\
=& \min_{i=1,..,N} 1-\Phi(\Vert x-x_i \Vert)^2 .
\end{align*}
Theorem \ref{th:estimate_fill_dist} gives an estimate on the fill distance as $h_N \leq C \gamma^{-2/(\tau-d/2)} \cdot N^{-1/d}$. Thus for $N$ large enough it 
holds $h_N < r_0$, i.e., $\min_{i=1,..,N} \Vert x-x_i \Vert < h_N < r_0 ~ \forall x \in \Omega$, hence the monotonicity of $\Phi$ can be used in the following. 
\\
Theorem \ref{th:lower_bound_power_func} provides a lower bound on the decay of the Power function, i.e., $\Vert P_N \Vert_{L^{\infty}} \geq c_P \cdot N^{1/2 - 
\tau/d}$. Any $\gamma$-stabilized greedy algorithm restricts the point selection to $\Omega_\gamma^{(N)} \equiv \{x\in \Omega: P_N(x) \geq \gamma \cdot \Vert 
P_N \Vert_\infty \}$, therefore the distance from such a point $x \in \Omega_\gamma^{(N)}$ from the old points $x_i \in X_N$ can be estimated:
\begin{alignat}{1} 
\label{eq:proof_dist_Omega}
c_P^2 \cdot \left(N^{\frac{1}{2}-\frac{\tau}{d}}\right)^2 \leq& \Vert P_N \Vert_\infty^2 \leq \gamma^{-2} \cdot P_N(x)^2 \nonumber \\
\leq& \gamma^{-2} \cdot \min_{i=1,..,N} 1-\Phi(\Vert x-x_i \Vert)^2 \nonumber \\
\leq& \gamma^{-2} \cdot \min_{i=1,..,N} 2c \Vert x-x_i \Vert^b \nonumber \\
\Rightarrow ~ \left( \frac{c_P^2 \cdot \gamma^2}{2c} \right)^{1/b} \cdot \left( N^{\frac{1}{2}-\frac{\tau}{d}} \right)^{2/b} \leq& \min_{i=1,..,N} \Vert x-x_i 
\Vert
\end{alignat}
In the last step $1 - \Phi(\Vert x-x_i \Vert)^2 = (1+\Phi(\Vert x-x_i \Vert)) \cdot (1-\Phi(\Vert x-x_i \Vert)) $ was estimated from above with help of the 
assumption $1-\Phi(r) \leq c \cdot r^b$ and the estimate $\Phi(\Vert x-x_i \Vert) \leq \Phi(0) = 1$. %
Since $x \in \Omega_\gamma^{(N)}$ was arbitrary, Inequality \eqref{eq:proof_dist_Omega} is valid for all $x \in \Omega_\gamma^{(N)}$. Thus it holds
\begin{align*}
\text{dist}(\Omega_\gamma^{(N)}, X_N) =& \inf_{x \in \Omega_\gamma^{(N)}} \min_{i=1,..,N} \Vert x-x_i \Vert \\
\geq& \left( \frac{c_P^2 \cdot \gamma^2}{2c} \right)^{1/b} \cdot \left( N^{\frac{1}{2}-\frac{\tau}{d}} \right)^{2/b}.
\end{align*} 
Now define $C = c_P^{2/b} \cdot \left(2c \right)^{-1/b}$ and apply Lemma \ref{lem:dist_sep_estimate} which finishes the proof.
\end{proof}

\begin{rem}
Theorem \ref{th:estimate_dist_Omega} allows to derive statements about the separation distance for a larger class of kernels, since no further restriction 
$\tau > d/2 + 1$ is needed. 
Although the derived result is weaker, it is still optimal in certain cases, even not covered by the first theorem. For example, if one applies the theorem to 
the basic Mat{\'e}rn kernel whose radial basis function is given by $\Phi(r) = e^{-r}$, the estimate yields $q_{N+1} \geq C \cdot \gamma^{-2} \cdot N^{-1/d}$ 
which shows uniformity also for this kernel.
\end{rem}

\section{Stability and refined error estimates} \label{sec:extensions}
As anticipated, we can now use the results derived in the previous sections to draw some additional conclusions on the stability and convergence of the 
interpolation by any $\gamma$-stabilized algorithm. 

Despite all the following results are just the combination of known results with
properties of the selected points, it is worth mentioning them here to highlight the features of the new algorithm.
These features are indeed either refined versions of the ones proven in \cite{SH16b} for the $P$-greedy selection, or completely new results. In particular,
they prove for the first time that a kernel-greedy algorithm can achieve stability, provide error bounds w.r.t. to general $L^p$ norm for the approximation 
of derivatives, and approximate functions which are outside of the native space.

\begin{cor}
Assume that $k$ is a translational invariant kernel with finite smoothness $\tau$, and let  $\Omega \subset \mathbb{R}^d$ be a bounded domain which satisfies 
an interior cone condition. Let $(X_N)\subset\Omega$ be the sequence of sets of points selected by any $\gamma$-stabilized algorithm, and let $c, c'>0$ denote 
generic constants independent of $N$ and $\gamma$. 
\begin{enumerate}[label=\roman*.]
 \item\label{item:stability} Bounds on $\lambda_{\min}$: Assume additional smoothness $\tau > d/2 +1$. Then the minimal eigenvalue $\lambda_{\min}(X_N)$ of the kernel matrix of $k$ on 
$X_N$ can be bounded as
\begin{align*}
c \cdot \gamma^{8\tau - 4d -4} \cdot N^{1-2\tau/d} \leq \lambda_{\min}(X_N) \leq c' \cdot \gamma^{-4} \cdot N^{1-2\tau/d}
\end{align*}
\item\label{item:lebesgue} Bound on the Lebesgue constant: Assume additional smoothness $\tau > d/2 +1$. Then there exists $N_0\in\N$ and a constant $c>0$ such that for all $N\geq N_0$ the 
Lebesgue constant is bounded as
\begin{align*}
\Lambda_{X_N}\leq c \gamma^{-4(\tau - d/2)} \sqrt{N}. 
\end{align*}
\item\label{item:convergence} Bounds for derivatives: Under the conditions of Theorem \ref{th:estimate_derivatives_wendland}, there exists $N_0\in\N$ and a constant $c>0$ such that 
for all $f\in W_p^{\tau}(\Omega)$ and all $N\geq N_0$ it holds
\begin{align*}
\seminorm{W_q^m(\Omega)}{f - \Pi_{V(X_N)}f} \leq c \gamma^{\frac{-2\left(\tau-m-d\left(1/p-1/q\right)_+\right)}{\tau-d/2}} 
N^{\left(-\frac{\tau-m}{d}+\left(\frac1p-\frac1q\right)_+\right)} \norm{W_p^{\tau}(\Omega)}{f},
\end{align*}
where the bound holds in particular for all $f\in\ns$, by using a different constant, $p=2$, and the norm $\norm{\ns}{f}$ in the right hand side.
 
\item\label{item:uniform} Asymptotic point distribution: Assume additional smoothness $\tau > d/2 +1$. Then the sequence of points is asymptotically uniformly distributed, and in particular 
there exists a constant $c>0$ such that
\begin{align*}
\rho_{X_N}:=\frac{h_N}{q_N} \leq c\ \gamma^{-4} \fa N\in\N.
\end{align*}

 \item\label{item:escaping} Escaping the native space: For all  $d/2<\beta\leq \tau$ and for all $0\leq \mu\leq \beta$ there exists $N_0\in\N$ and a constant $c>0$ such that for all 
$f\in W_2^{\beta}(\Omega)$ and for all $N\geq N_0$ it holds
 \begin{align*}
\seminorm{W_2^{\mu}(\Omega)}{f - \Pi_{V(X_N)}f} 
&= c \gamma^{-\frac{2 (\beta-\mu)}{\tau-d/2}-4(\tau-\mu)} N^{-\frac{\beta-\mu}{d}}  \norm{W_2^{\beta}(\Omega)}{f}.
\end{align*}

\end{enumerate}
\end{cor}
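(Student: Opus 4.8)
The plan is to combine the escaping-the-native-space interpolation estimate of \cite{Narcowich2006} with the decay of the fill distance proven in Theorem~\ref{th:estimate_fill_dist} and the bound on the uniformity constant established in item~\ref{item:uniform} above. Concretely, since $\ns$ is norm equivalent to $W_2^{\tau}(\Omega)$ and $\Omega$ satisfies the standing geometric assumptions, the scattered-zeros machinery of \cite{Narcowich2006} yields a constant $C>0$, independent of the point set, such that for every discrete $X\subset\Omega$ with sufficiently small fill distance $h_{X,\Omega}$, every $d/2<\beta\leq\tau$, every $0\leq\mu\leq\beta$ and every $g\in W_2^{\beta}(\Omega)$ one has
\begin{align*}
\seminorm{W_2^{\mu}(\Omega)}{g - \Pi_{V(X)}g} \leq C\, h_{X,\Omega}^{\beta-\mu}\left(\frac{h_{X,\Omega}}{q_X}\right)^{\tau-\mu}\seminorm{W_2^{\beta}(\Omega)}{g}.
\end{align*}
This is exactly the ingredient that allows one to treat target functions $g$ which need not lie in $\ns\cong W_2^{\tau}(\Omega)$: on the right hand side only the higher smoothness $\tau$ of the kernel, the lower smoothness $\beta$ of $g$, and the geometry of $X$ (through the fill distance $h_{X,\Omega}$ and the separation distance $q_X$) appear.

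I would then specialize this inequality to $g=f$ and to the sets $X=X_N$ produced by any $\gamma$-stabilized algorithm. By Theorem~\ref{th:estimate_fill_dist} we have $h_N\leq c\,\gamma^{-2/(\tau-d/2)}\,N^{-1/d}$, hence $h_N\to0$ and there is $N_0\in\N$ for which the smallness hypothesis on $h_N$ is satisfied for all $N\geq N_0$. For the uniformity constant, item~\ref{item:uniform} gives $\rho_{X_N}=h_N/q_N\leq c\,\gamma^{-4}$ for all $N$; this is the step that uses the additional smoothness $\tau>d/2+1$ (for kernels with only $\tau>d/2$ one could instead invoke Theorem~\ref{th:estimate_dist_Omega}, at the price of a worse exponent in $N$). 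Substituting both estimates into the bound above, using $\beta-\mu\geq0$ and $\tau-\mu\geq0$ so that all constants stay bounded, and collecting the powers of $\gamma$ and $N$,
\begin{align*}
h_N^{\beta-\mu}\,\rho_{X_N}^{\tau-\mu}\leq c\,\gamma^{-\frac{2(\beta-\mu)}{\tau-d/2}}\,\gamma^{-4(\tau-\mu)}\,N^{-\frac{\beta-\mu}{d}},
\end{align*}
and finally estimating $\seminorm{W_2^{\beta}(\Omega)}{f}\leq\norm{W_2^{\beta}(\Omega)}{f}$ yields the claimed bound. For $\beta=\tau$ this reproduces the worst-case optimal native space rate, while for $\beta<\tau$ it shows that any $\gamma$-stabilized algorithm still converges at the optimal rate $N^{-(\beta-\mu)/d}$ on the larger space $W_2^{\beta}(\Omega)$.

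There is no genuinely hard analytical step left here: the substantive work is the scattered-zeros estimate of \cite{Narcowich2006} together with the proof that the selected points are asymptotically uniformly distributed with an explicitly $\gamma$-dependent uniformity constant (Theorem~\ref{th:estimate_fill_dist} and item~\ref{item:uniform}), both of which are already available. The only points that require some care are (i) verifying that the fill distance is eventually small enough for the estimate of \cite{Narcowich2006} to apply, which is immediate from $h_N\leq c\,\gamma^{-2/(\tau-d/2)}\,N^{-1/d}\to0$, and (ii) the bookkeeping of the exact power of $\gamma$, which is forced by the factor $\gamma^{-2/(\tau-d/2)}$ of the fill distance bound raised to the power $\beta-\mu$ together with the factor $\gamma^{-4}$ of the uniformity bound raised to the power $\tau-\mu$.
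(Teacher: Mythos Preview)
Your proof is correct and follows essentially the same route as the paper: invoke the Narcowich--Ward--Wendland estimate $\seminorm{W_2^{\mu}}{f-\Pi_{V(X)}f}\leq C\,h_X^{\beta-\mu}\rho_X^{\tau-\mu}\norm{W_2^{\beta}}{f}$, then insert the fill-distance bound of Theorem~\ref{th:estimate_fill_dist} and the uniformity bound of item~\ref{item:uniform} and collect the powers of $\gamma$ and $N$. The only cosmetic difference is that the paper quotes the cited estimate directly with the full norm on the right, whereas you first write the seminorm and then pass to the norm; your additional remark about the alternative via Theorem~\ref{th:estimate_dist_Omega} when $\tau\leq d/2+1$ is not in the paper but is a harmless aside.
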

\begin{proof}
We prove the five points separately:
\begin{enumerate}[label=\roman*.]
\item In the case of kernel of finite smoothness it is known that the smallest eigenvalue can be bounded from below with help of the separation distance $q_N$ 
by $\lambda_\text{min}(X_N) \geq \tilde{c} \cdot q_N^{2\tau - d}$, see e.g. \cite[Chap. 12.2]{Wendland2005}. The lower estimate on the seperation distance from 
Theorem \ref{th:lower_bound_sep_dist2} can be applied to conclude the stated lower estimate. 

For the upper estimate we use $\lambda_\text{min}(X_N) \leq | P_{N-1}(x_N)|^2$ \cite[Chap. 12.1]{Wendland2005} and estimate the right hand side further by 
applying the upper bound on the maximal value of the Power function from Theorem \ref{th:decay_power_func_restricted} which yields the result.

\item This is simply the application of point \eqref{item:uniform} to Theorem 1 of \cite{DeMarchi2010}.
\item Here we just substitute the decay of the fill distance of Theorem \ref{th:estimate_fill_dist} into the estimate of Theorem 
\ref{th:estimate_derivatives_wendland}. The rate of decay on the right hand side is thus given by
\begin{align*}
h_N^{\tau-m-d(1/p-1/q)_+}
&\leq\left( c \cdot \gamma^{-2/(\tau-d/2)} \cdot N^{-1/d}\right)^{\tau-m-d(1/p-1/q)_+}\\
&\leq c \gamma^{\frac{-2\left(\tau-m-d\left(1/p-1/q\right)_+\right)}{\tau-d/2}} N^{-\frac{\tau-m}{d}+\left(\frac1p-\frac1q\right)_+}.
\end{align*}
To obtain the bound for $f\in\ns$, it is sufficient to recall that $\ns$ is norm equivalent to $W_2^{\tau}(\Omega)$.

\item This points is just a direct computation using the estimates of Theorem \ref{th:estimate_fill_dist} and Theorem \ref{th:lower_bound_sep_dist2}.
\item Theorem 4.2 from \cite{Narcowich2006} gives that for all $f\in W_2^{\beta}(\Omega)$ with $d/2<\beta\leq \tau$, and for all $0\leq \mu\leq \beta$, there 
exists 
$h_0$ such that for all $X\subset\Omega$ with $h_X\leq h_0$ it holds
\begin{align*}
\seminorm{W_2^{\mu}(\Omega)}{f - \Pi_{V(X)}f} \leq C h_X ^{\beta-\mu} \rho_X ^{\tau-\mu} \norm{W_2^{\beta}(\Omega)}{f}.
\end{align*}
We can then use the bounds of Theorem \ref{th:estimate_fill_dist} and the bound on the uniformity constant proven in the previous point to obtain that
\begin{align*}
\seminorm{W_2^{\mu}(\Omega)}{f - \Pi_{V(X_N)}f} 
&\leq C' \gamma^{-\frac{2 (\beta-\mu)}{\tau-d/2}} \gamma^{-4(\tau-\mu)} N^{-\frac{\beta-\mu}{d}}  \norm{W_2^{\beta}(\Omega)}{f}\\
&= C' \gamma^{-\frac{2 (\beta-\mu)}{\tau-d/2}-4(\tau-\mu)} N^{-\frac{\beta-\mu}{d}}  \norm{W_2^{\beta}(\Omega)}{f}.
\end{align*}
\end{enumerate}
\end{proof}

Observe that point \eqref{item:stability} implies that using a $\gamma$-stabilized greedy algorithm yields a provable stable interpolation process, since the 
condition number of the interpolation matrix cannot grow arbitrary fast. Thus it is justified to name any $\gamma$-restricted algorithm 
also $\gamma$\textit{-stabilized} algorithm. 

\begin{rem}
We remark that precise error bounds w.r.t.\ an $L^q$ norm and for the approximation of derivatives are difficult to obtain having only bounds on the 
$L^{\infty}$ norm of the Power function as the ones of Theorem \ref{th:decay_power_func_restricted}. For example, if $k$ is smooth enough then any $f\in\calh$ 
is continuous, and 
thus if $\Omega$ is bounded it holds
\begin{align*}
\norm{L^q(\Omega)}{f - \Pi_{V(X_N)}(f)} 
&\leq  \norm{L^{\infty}(\Omega)}{f - \Pi_{V(X_N)}(f)} \meas(\Omega)^{1/q}\\
&\leq \meas(\Omega)^{1/q} \norm{L^{\infty}(\Omega)}{P_{N}} \norm{\ns}{f - \Pi_{V(X_N)}(f)} \\
&\leq c \meas(\Omega)^{1/q}  \gamma^{-2} \cdot N^{\frac{1}{2}-\frac{\tau}{d}}  \norm{\ns}{f - \Pi_{V(X_N)}(f)}.
\end{align*}
This technique, which was used in \cite{SH16b}, gives an extra factor of $N^{1/2 - (1/2 - 1/q)_+}$ which is in general suboptimal compared to the one of the 
point \eqref{item:convergence} of the corollary. 

This improvement is relevant also for other scenarios that have been discussed in the literature. For example, for $q = 2$ and $m = 0$ the estimate within 
point \eqref{item:convergence} of the corollary gives a rate of convergence of order 
$N^{-\frac{\tau}{d}}$, and not just $N^{\frac12-\frac{\tau}{d}}$.
This improved rate of convergence was observed numerically in \cite{Schaback2018a} in the setting of superconvergence with uniformly distributed points, though 
not proven (see the discussion at p. 21--22). 
\end{rem}

\section{Numerical experiments} \label{sec:numerical_experiments}

\subsection{Decay rates of Power function}

To complement the analytically derived upper and lower bound on the decay of the value of the Power function for any $\gamma$-stabilized algorithm, some 
numerical experiments are 
performed. 

To stay as general as possible, no concrete selection criterion was chosen, and instead arbitrary points within the restricted area $\Omega_\gamma^{(N)}$ were 
chosen. In practice this is done by selecting a random point within the restricted set and, to minimize the effect of this non-deterministic 
selection, every run was repeated $10$ times. However, we remark that the computed 
values still slightly depend on the computer that was used.

The experiments were performed on $\Omega = [0,1]^d$ for $d \in \{1, 3, 5\}$ using 
$\gamma \in \{0.1, 0.2, .., 0.9, 1\}$. The set $\Omega$ was discretized with $3 \cdot 10^4$ uniformly randomly sampled points.

The basic Mat\'ern and the linear Mat\'ern kernel were used, 
whose radial basis functions are given by
\begin{align*}
\Phi_{\text{bas}}(r) =& e^{-r} \\
\Phi_{\text{lin}}(r) =& (1+r) \cdot e^{-r}.
\end{align*}

Up to $N = 800$ sampling points were chosen in each run, and they correspond to up to $800$ values of the maximal value of the Power function which are denoted as 
$\{\Vert P_N 
\Vert_{L^\infty(\Omega)}\}_{N \in [1:800]}$.

To estimate the numerical rate of convergence, we first computed a fit to the values $\{\log(\Vert 
P_N \Vert_{L^\infty(\Omega)})\}_{N \in I }$ for $I \in \{[a, b] ~ | ~ a \in \{50, 75, 100\}, b \in \{600, 700, 800 \} \}$
with functions of the form $y_2(n) = \log(\alpha) + \lambda \cdot \log(n)$. Then, the mean value of these nine 
fit parameters $\alpha$ is regarded as a meaningful estimation of the numerical decay value. 
Using this procedure, we compensate the effect of the values of the Power function for the first iterates (which are affected the most by the non-deterministic 
point selection) and the ones for the final iterates. This mean value and the standard deviation of these nine values is displayed in Figure 
\ref{fig:convrates_powerfunc} for the different experimental settings. Additionally, the dotted line indicates the expected theoretical value of $1/2-\tau/d$.

We can confirm that the computed numerical values are in accordance with the analytical result. Taking into account that the numerically computed values are 
fit parameters, whereas the analytically derived quantity $1/2-\tau/d$ is the decay rate of an upper and lower bound, the results are matching.

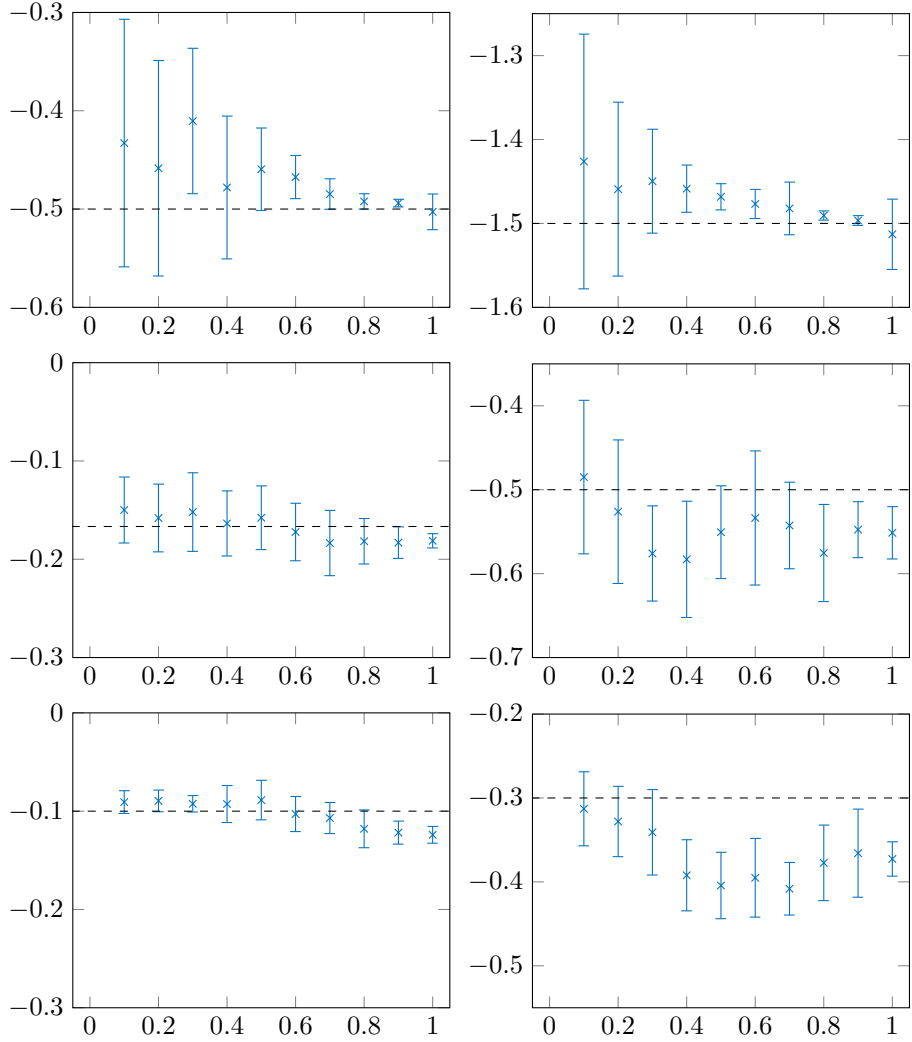
\begin{figure}[htp]
\setlength\fwidth{.43\textwidth}
\centering
%
%
\definecolor{mycolor1}{rgb}{0.00000,0.44700,0.74100}%
\begin{tikzpicture}

\begin{axis}[%
width=0.951\fwidth,
height=0.75\fwidth,
at={(0\fwidth,0\fwidth)},
scale only axis,
xmin=-0.05,
xmax=1.05,
ymin=-0.6,
ymax=-0.3,
axis background/.style={fill=white}
]
\addplot [color=mycolor1, draw=none, mark=x, mark options={solid, mycolor1}, forget plot]
 plot [error bars/.cd, y dir = both, y explicit]
 table[row sep=crcr, y error plus index=2, y error minus index=3]{%
0.1	-0.432848750298806	0.125943354399479	0.125943354399479\\
0.2	-0.458514372746713	0.109558098146562	0.109558098146562\\
0.3	-0.410399368593493	0.0739462236725471	0.0739462236725471\\
0.4	-0.478066517098345	0.0726553266214762	0.0726553266214762\\
0.5	-0.459478834287909	0.041930848227684	0.041930848227684\\
0.6	-0.467456665293884	0.0219561129972174	0.0219561129972174\\
0.7	-0.484722630716747	0.0154810450917111	0.0154810450917111\\
0.8	-0.492227303378184	0.00778454788835096	0.00778454788835096\\
0.9	-0.493937603769849	0.00388875251005832	0.00388875251005832\\
1	-0.502843759552397	0.0181583288687447	0.0181583288687447\\
};
\addplot [color=black, dashed, forget plot]
  table[row sep=crcr]{%
-0.1	-0.5\\
1.1	-0.5\\
};
\end{axis}

\end{tikzpicture}%
%
%
\definecolor{mycolor1}{rgb}{0.00000,0.44700,0.74100}%
\begin{tikzpicture}

\begin{axis}[%
width=0.951\fwidth,
height=0.747\fwidth,
at={(0\fwidth,0\fwidth)},
scale only axis,
xmin=-0.05,
xmax=1.05,
ymin=-1.6,
ymax=-1.25,
axis background/.style={fill=white}
]
\addplot [color=mycolor1, draw=none, mark=x, mark options={solid, mycolor1}, forget plot]
 plot [error bars/.cd, y dir = both, y explicit]
 table[row sep=crcr, y error plus index=2, y error minus index=3]{%
0.1	-1.42616624460492	0.151786300693509	0.151786300693509\\
0.2	-1.4591159348793	0.103611350885239	0.103611350885239\\
0.3	-1.44966478017449	0.0619022291276815	0.0619022291276815\\
0.4	-1.45856640983563	0.0281371072234157	0.0281371072234157\\
0.5	-1.46827122877057	0.0156206429756592	0.0156206429756592\\
0.6	-1.47679656388933	0.0173383599412206	0.0173383599412206\\
0.7	-1.4821289008878	0.0313994922764172	0.0313994922764172\\
0.8	-1.49056384257109	0.00546184751103261	0.00546184751103261\\
0.9	-1.49650669852995	0.00581807528615969	0.00581807528615969\\
1	-1.5129576334732	0.0418928176985486	0.0418928176985486\\
};
\addplot [color=black, dashed, forget plot]
  table[row sep=crcr]{%
-0.1	-1.5\\
1.1	-1.5\\
};
\end{axis}
\end{tikzpicture}%
%
%
\definecolor{mycolor1}{rgb}{0.00000,0.44700,0.74100}%
\begin{tikzpicture}

\begin{axis}[%
width=0.951\fwidth,
height=0.75\fwidth,
at={(0\fwidth,0\fwidth)},
scale only axis,
xmin=-0.05,
xmax=1.05,
ymin=-0.3,
ymax=0,
axis background/.style={fill=white}
]
\addplot [color=mycolor1, draw=none, mark=x, mark options={solid, mycolor1}, forget plot]
 plot [error bars/.cd, y dir = both, y explicit]
 table[row sep=crcr, y error plus index=2, y error minus index=3]{%
0.1	-0.14988754636359	0.0335660638636063	0.0335660638636063\\
0.2	-0.158008474346889	0.0344709185844447	0.0344709185844447\\
0.3	-0.151999827331408	0.0399623282218788	0.0399623282218788\\
0.4	-0.163559739551792	0.0331216850681091	0.0331216850681091\\
0.5	-0.157768718256209	0.0324035548665735	0.0324035548665735\\
0.6	-0.172282354854874	0.029248729918187	0.029248729918187\\
0.7	-0.183497325082881	0.0331942765579113	0.0331942765579113\\
0.8	-0.18167402207367	0.0231364165474408	0.0231364165474408\\
0.9	-0.18306942699247	0.0160473345057553	0.0160473345057553\\
1	-0.181216236613093	0.00727188474334162	0.00727188474334162\\
};
\addplot [color=black, dashed, forget plot]
  table[row sep=crcr]{%
-0.1	-0.166666666666667\\
1.1	-0.166666666666667\\
};
\end{axis}
\end{tikzpicture}%
%
%
\definecolor{mycolor1}{rgb}{0.00000,0.44700,0.74100}%
\begin{tikzpicture}

\begin{axis}[%
width=0.951\fwidth,
height=0.747\fwidth,
at={(0\fwidth,0\fwidth)},
scale only axis,
xmin=-0.05,
xmax=1.05,
ymin=-0.7,
ymax=-0.35,
axis background/.style={fill=white}
]
\addplot [color=mycolor1, draw=none, mark=x, mark options={solid, mycolor1}, forget plot]
 plot [error bars/.cd, y dir = both, y explicit]
 table[row sep=crcr, y error plus index=2, y error minus index=3]{%
0.1	-0.484913041493392	0.0914797122649271	0.0914797122649271\\
0.2	-0.526188873996523	0.0855405558269543	0.0855405558269543\\
0.3	-0.575950851769848	0.056721582355975	0.056721582355975\\
0.4	-0.582982155184392	0.0692966810332161	0.0692966810332161\\
0.5	-0.550602455346132	0.0552686774271147	0.0552686774271147\\
0.6	-0.533656129546527	0.0799318801065413	0.0799318801065413\\
0.7	-0.542618837457652	0.0515754636742958	0.0515754636742958\\
0.8	-0.575421965907576	0.0578878796662733	0.0578878796662733\\
0.9	-0.547592381760488	0.0333500295409031	0.0333500295409031\\
1	-0.551348353770155	0.0311567019200202	0.0311567019200202\\
};
\addplot [color=black, dashed, forget plot]
  table[row sep=crcr]{%
-0.1	-0.5\\
1.1	-0.5\\
};
\end{axis}
\end{tikzpicture}%
%
%
\definecolor{mycolor1}{rgb}{0.00000,0.44700,0.74100}%
\begin{tikzpicture}

\begin{axis}[%
width=0.951\fwidth,
height=0.75\fwidth,
at={(0\fwidth,0\fwidth)},
scale only axis,
xmin=-0.05,
xmax=1.05,
ymin=-0.3,
ymax=0,
axis background/.style={fill=white}
]
\addplot [color=mycolor1, draw=none, mark=x, mark options={solid, mycolor1}, forget plot]
 plot [error bars/.cd, y dir = both, y explicit]
 table[row sep=crcr, y error plus index=2, y error minus index=3]{%
0.1	-0.090744466543422	0.0115405118016188	0.0115405118016188\\
0.2	-0.0895256958586424	0.0110048899752175	0.0110048899752175\\
0.3	-0.0925007668855116	0.00836865722525682	0.00836865722525682\\
0.4	-0.092729287964312	0.0188192479794409	0.0188192479794409\\
0.5	-0.0887144477671652	0.0201195867995896	0.0201195867995896\\
0.6	-0.102921855084782	0.0178317608072174	0.0178317608072174\\
0.7	-0.106946977977516	0.0158140714479308	0.0158140714479308\\
0.8	-0.117959214528656	0.0192315396973658	0.0192315396973658\\
0.9	-0.121815467273983	0.0117087703292071	0.0117087703292071\\
1	-0.124071307039827	0.00851404292809513	0.00851404292809513\\
};
\addplot [color=black, dashed, forget plot]
  table[row sep=crcr]{%
-0.1	-0.1\\
1.1	-0.1\\
};
\end{axis}

\end{tikzpicture}%
%
%
\definecolor{mycolor1}{rgb}{0.00000,0.44700,0.74100}%
\begin{tikzpicture}

\begin{axis}[%
width=0.951\fwidth,
height=0.747\fwidth,
at={(0\fwidth,0\fwidth)},
scale only axis,
xmin=-0.05,
xmax=1.05,
ymin=-0.55,
ymax=-0.2,
axis background/.style={fill=white}
]
\addplot [color=mycolor1, draw=none, mark=x, mark options={solid, mycolor1}, forget plot]
 plot [error bars/.cd, y dir = both, y explicit]
 table[row sep=crcr, y error plus index=2, y error minus index=3]{%
0.1	-0.31291556969091	0.0441114265530284	0.0441114265530284\\
0.2	-0.328034252350561	0.0419120701162572	0.0419120701162572\\
0.3	-0.340913400419332	0.0509067508857036	0.0509067508857036\\
0.4	-0.392130602090383	0.0423682293294549	0.0423682293294549\\
0.5	-0.404303733410405	0.039571777478897	0.039571777478897\\
0.6	-0.395093141410368	0.046901574449215	0.046901574449215\\
0.7	-0.40819413887513	0.0313367447724738	0.0313367447724738\\
0.8	-0.377321835274497	0.0450326664487731	0.0450326664487731\\
0.9	-0.365791777097728	0.0524817794992628	0.0524817794992628\\
1	-0.372701156356239	0.0204713829616534	0.0204713829616534\\
};
\addplot [color=black, dashed, forget plot]
  table[row sep=crcr]{%
-0.1	-0.3\\
1.1	-0.3\\
};
\end{axis}
\end{tikzpicture}%
\caption[Convergence rates for the decay of the Power function.]{Convergence rates for the decay of the maximal values $\Vert P_N \Vert_{L^\infty(\Omega)}$ of 
the Power function according to stabilized greedy algorithm for varying restriction parameters $\gamma$ applied to $\Omega = [0,1]^d$. The horizontal axis 
describes the restriction parameter $\gamma$ and the vertical axis describes the convergence rate. \\
From top to bottom the dimensions $d=1,3,5$ were used. On the left side the unscaled basic Mat\'ern kernel was applied, on the right side the unscaled linear 
Mat\'ern kernel was used. The computed decay rates are scattered around a mean. The dashed line marks the proven decay rate of $1/2 - \tau/d$. }
\label{fig:convrates_powerfunc}
\end{figure}

\subsection{Improved accuracy of \emph{f/P}-greedy}

In the following, an example is provided which shows that the $\gamma$-stabilized $f/P$-greedy algorithm is able to yield a better interpolant than the plain 
$f/P$-greedy algorithm in terms of expansion size and accuracy. 

For this, functions of the form 
\begin{align*}
f_\alpha: [-0.5, 0.5] \rightarrow \mathbb{R}, x \mapsto |x|^\alpha \cdot e^{-x^2}
\end{align*}
are considered which depend on a parameter $\alpha>0$. A calculation of the Fourier transform of $f_\alpha: \mathbb{R} \rightarrow \mathbb{R}$ shows 
that $f_\alpha \in 
W_2^{\alpha + 1/2-\epsilon}(\mathbb{R}) ~ \forall \epsilon > 0$ and thus we also have $f_\alpha \in W_2^{\alpha + 1/2 - \epsilon}(\R) ~ \forall \epsilon > 0$.

The function $f_\alpha$ is investigated with the $\gamma$-stabilized $f/P$-greedy algorithm and the unstabilized $f/P$-greedy algorithm using the linear 
Mat\'ern kernel. 
The native space of the linear Mat\'ern kernel on $[-0.5, 0.5]$ is norm equivalent to $W_2^2([-0.5, 0.5])$. Thus $\alpha = 1.51$ is chosen, since it yields a 
function with 
smoothness almost $2.01$, which is close to the smoothness of the native space. Furthermore, $\alpha = 3.5$ is chosen, which yields a function with 
approximately double 
the smoothness of the native space.

The training and the test set consists of each $10^5$ uniformly sampled distinct points within $\Omega$. The algorithm was run until the condition number of the 
kernel matrix exceeded a bound of $10^{14}$. The application of the $\gamma$-stabilized $f/P$-greedy algorithms for $\gamma \in \{10^{-4}, 10^{-3}, 10^{-2}, 
10^{-1}, 10^0\}$ as well as the unstabilized $f/P$-greedy yielded different decays of the residuals. Figure \ref{fig:special_functions_2} displays the results 
and Table \ref{tab:special_function_2_accuracy} collects some numbers, namely the number $N_{\max}$ of chosen sampling points and the remaining error, i.e., 
the maximal error of the final residual $\Vert r_{N_{\max}} \Vert_\infty$.

From the values in Table \ref{tab:special_function_2_accuracy} and Figure \ref{fig:special_functions_2} we can draw the following conclusions:
\begin{enumerate}[label=\roman*.]
\setlength\itemsep{0em}
\item In the case of $\alpha = 3.5, \gamma = 0$ the interpolant explodes due to numerical inaccuracies. Apart from this the unstabilized $f/P$-greedy algorithm 
(i.e., $\gamma=0$) and the barely stabilized ones yields the fastest residual decay rates such that their lines partly hide each other in the first figure. The 
reason is presumably that the restriction parameter is that small, that the limitation due to $\Omega_\gamma^{(N)}$ is not an actual restriction and thus almost 
the same points are chosen all the time.

With increasing value of $\gamma$ the decay behaviours become worse, but they are always between the decay behaviours for the unstabilized greedy ($\gamma=0$) and the $P$-greedy 
($\gamma=1$).
\item Increasing the value of the restriction parameter $\gamma$ yields more sampling points. This can be observed best within the table.

For intermediate values of $\gamma$ similar accuracies compared to $\gamma=1$ can be achieved but with way less sampling points. This speeds up the evaluation 
of the interpolant.
\end{enumerate}
Thus there is a tradeoff between speed of the residual decay and the amount of chosen sampling points. In fact for $\alpha = 3.5$ the minimal error is attained 
using an intermediate restriction parameter, namely $\gamma = 10^{-3}$. The reason is that the unstabilized $f/P$-greedy algorithm and the one with a very small 
restriction parameter $\gamma$ stop too early due to the criterion on the condition number. With increasing restriction parameter $\gamma$ more and more points 
are chosen which yield a better approximation. 

As a result it can be concluded that the $\gamma$-stabilized algorithms yield also in practice a more stabilized algorithm, such that more points can be 
selected. 
Especially for the $\gamma$-stabilized $f/P$-greedy the restriction yields a better approximation, since the algorithm does not stop that early due to the 
exceed of the bound on the condition number. The additional sampling points help to reach a better approximation. Since the $f/P$ selection criterion 
incorporates the Power function in the denominator, the sampling points are sometimes chosen close nearby already selected points. 

Moreover, when these stability conditions are satisfied, it is evident that the use of a function-dependent selection rule improves the convergence speed with 
respect to a pure $P$-greedy selection. This effect, as mentioned before, is not appearing in our estimates, but it is an expected behavior that we plan to 
investigate further. 

For any practical use of the algorithm this tradeoff can be faced by selecting the restriction parameter $\gamma$ with $K$-fold cross validation, so that an 
almost optimal value is used for a given problem. \\

\begin{figure}[ht]
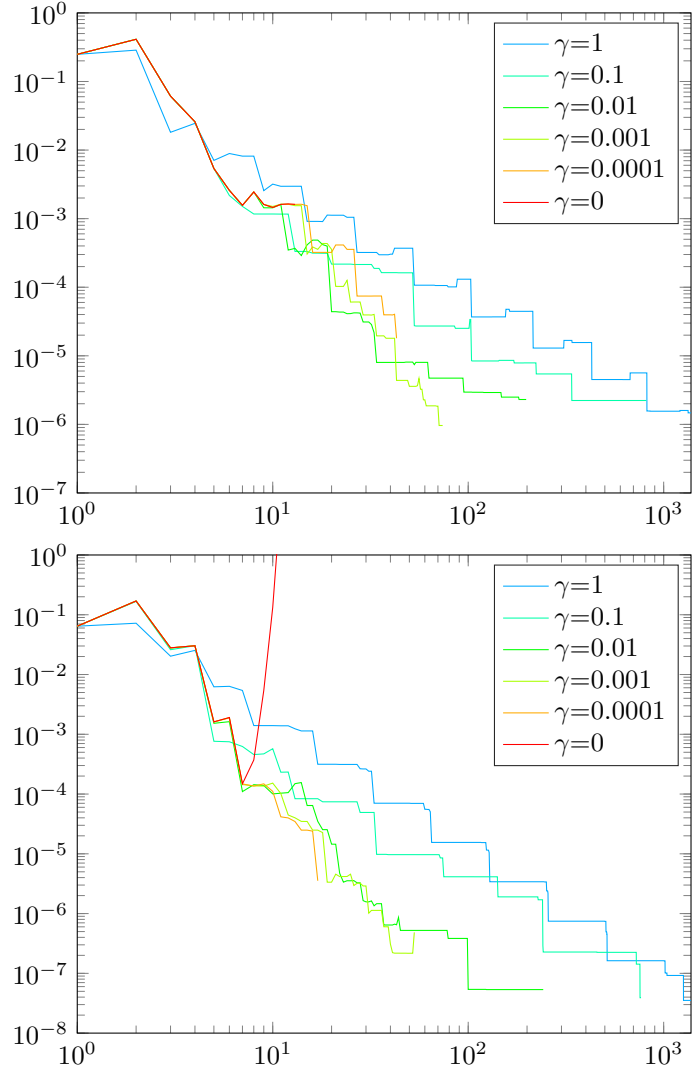

\setlength\fwidth{.7\textwidth}
\centering
\input{matlab/example_improved_accuracy_1.51_fP.tex}
\input{matlab/example_improved_accuracy_3.5_fP.tex}
\caption{Residual decay for functions given by $f_\alpha(x) = |x|^\alpha \cdot \exp(-x^2)$ for the linear Mat\'ern kernel for $\alpha=1.51$ (top) and 
$\alpha=3.5$ (bottom) using the $\gamma$-stabilized $f/P$-greedy. The horizontal axis describes the number of chosen sampling points, the vertical axis 
describes the maximal residual value. For small values of $\gamma$ or $\gamma=0$ the algorithm stopped early, since the condition number exceeded a predefined 
bound of $10^{14}$.} 
\label{fig:special_functions_2}
\end{figure}

\begin{table}[htp]
\centering
\caption[Comparison of errors.]{Minimal error as well as number of chosen points depending on the restriction parameter $\gamma$ for the function $f_\alpha$ 
with $\alpha \in \{1.51, 3.5 \}$.} %
\renewcommand{\arraystretch}{1.2}
\begin{tabular}{|l|ll|ll|} \hline
$f/P$ 			& \multicolumn{2}{|c|}{$\alpha=1.51$} 	& \multicolumn{2}{|c|}{$\alpha=3.5$} \\ \hline \hline
$\gamma = $	& $N_{\max}$	& $\Vert r_{N_{\max}} \Vert_\infty$ 	& $N_{\max}$	& $\Vert r_{N_{\max}} \Vert_\infty$ \\ \hline
$0$			& 13	& $1.61 \cdot 10^{-3}$	& 29	& $4.88 \cdot 10^{31}$ \\
$10^{-4}$	& 43	& $1.80 \cdot 10^{-5}$	& 17	& $3.54 \cdot 10^{-6}$ \\
$10^{-3}$	& 74	& $9.63 \cdot 10^{-7}$	& 53	& $4.85 \cdot 10^{-7}$ \\
$10^{-2}$	& 198	& $2.31 \cdot 10^{-6}$	& 242	& $5.35 \cdot 10^{-8}$ \\
$10^{-1}$	& 815	& $2.23 \cdot 10^{-6}$	& 766	& $3.91 \cdot 10^{-8}$ \\
$10^{0}$	& 1380	& $1.47 \cdot 10^{-6}$	& 1380	& $3.53 \cdot 10^{-8}$ \\ \hline
\end{tabular}
\label{tab:special_function_2_accuracy}
\end{table}

\subsection{Distribution of sampling points}

Finally we provide examples to visualize the distribution of the selected sampling points on some arbitrarily chosen domain for several values of $\gamma$. 
For this we consider
\begin{align*}
g(\varphi) = 0.35 \cdot (\cos \left( \pi \left( \frac{\varphi}{\pi} \right)^2 \right)+2) \cdot (0.15 \cdot \cos(\varphi)^2 + 0.3).
\end{align*}
Let $\theta(x) \in [0, 2\pi)$ denote the standard angular coordinate of $x \in \R^2$ in the polar coordinate system and define $a := (0.17, 0.17)^T \in \R^2$. 
The domain $\Omega \subset \R^2$ is given by $\Omega = \Omega_1 \setminus \Omega_2$ with
\begin{align*}
\Omega_1 &:= \{ x + (0.1, 0)^T \in \R^2 ~ | ~ \Vert x \Vert_2 < g(\theta(x) + \pi) \} \\
\Omega_2 &:= \{ x \in \R^2 ~ | ~ \Vert x-a \Vert_2^2 \leq 0.003  \}.
\end{align*}
For the numerical implementation of the greedy selection algorithm, the domain $\Omega$ was discretized with 831 uniformly randomly distributed points. Now the 
linear Mat\'ern kernel and the $\gamma$-stabilized $f/P$-greedy algorithm with $\gamma \in \{0, 0.04, 0.15, 1\}$ was applied to the function 
\begin{align*}
f(x) = \frac{1}{\Vert x-a \Vert^2}
\end{align*}
and 50 sampling points were selected for each $\gamma$-value. The different point distributions are displayed in Figure \ref{fig:example_point_distribution}. One can observe that for $\gamma=0$ all the points are clustered close to the point $a$, which is the midpoint of the "cut out" hole $\Omega_2$. For $\gamma=1$ the points are uniformly distributed within $\Omega$. The cases $\gamma=0.04$ and $\gamma=0.15$ provide intermediate distributions.

\begin{figure}[htp]
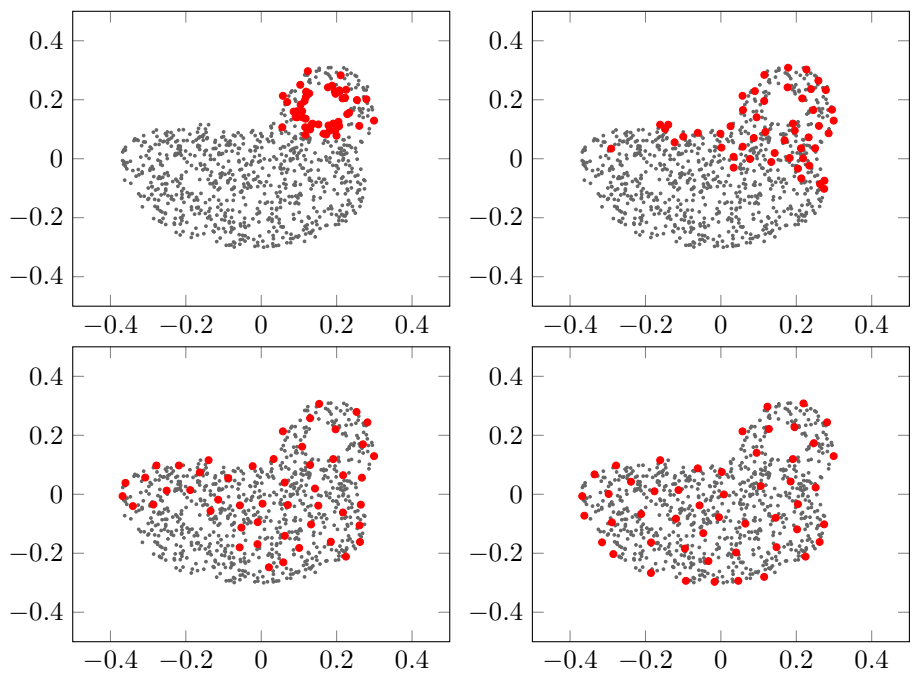

\setlength\fwidth{.43\textwidth}
\centering
\input{matlab/example_point_distribution_0.tex}
\input{matlab/example_point_distribution_0.04.tex}
\input{matlab/example_point_distribution_0.15.tex}
\input{matlab/example_point_distribution_1.tex}
\caption[Point distribution for several $\gamma$-values.]{Selected sampling points (red) of the $\gamma$-stabilized $f/P$-greedy algorithm using the linear 
Mat\'ern kernel. From left to right, top to bottom $\gamma \in \{ 0, 0.04, 0.15, 1 \}$ was used. For small $\gamma$ values the point distribution is better 
adopted to the given data function $f$.}
\label{fig:example_point_distribution}
\end{figure}

\section{Conclusion and Outlook}

In this paper the class of $\gamma$-stabilized greedy kernel algorithms was introduced and investigated analytically, especially for kernels of finite smoothness $\tau > d/2$. 
Precise estimates for the decay of the Power function were derived, and these lead to convergence rates. Furthermore, the resulting point distribution was 
quantified and their asymptotical uniform distribution was proven. This strong result leads to improved convergence rates and stability statements. The results 
were illustrated with numerical examples.

However, some questions remain open. Notably, though the $\gamma$-stabilized algorithm can be specialized to single functions, we derived in this paper only 
general convergence statements that hold for the entire set of functions of the native space.
Refined statements on the 
adaptive behavior which could be observed in the numerical examples for specific functions 
  will be the focus of further investigations.

\vspace{.5cm}
\textbf{Acknowledgements:} The authors acknowledge the funding of the project by the Deutsche Forschungsgemeinschaft (DFG, German Research Foundation) under
Germany's Excellence Strategy - EXC 2075 - 390740016. \\
We also thank Dominik Wittwar for discussions leading to the idea of the stabilized algorithms.

\bibliography{bib_arxiv}
\bibliographystyle{abbrv}

\end{document}